\documentclass[reqno,11pt]{amsart}
\usepackage{lineno,amsmath,amsthm,amssymb,graphics,epsfig,mathrsfs, eufrak}
\usepackage{enumerate,geometry,etoolbox, scalerel, stackengine, relsize}
\usepackage{amscd,latexsym,url,upgreek,mathtools,color}
\usepackage{mathrsfs}
\usepackage{hyperref}
\usepackage[english]{babel}

\stackMath
\newcommand\reallywidehat[1]{%
\savestack{\tmpbox}{\stretchto{%
  \scaleto{%
    \scalerel*[\widthof{\ensuremath{#1}}]{\kern-.6pt\bigwedge\kern-.6pt}%
    {\rule[-\textheight/2]{1ex}{\textheight}}
  }{\textheight}%
}{0.5ex}}%
\stackon[1pt]{#1}{\tmpbox}%
}


\newtheorem{theorem}{Theorem}[section]
\newtheorem{lemma}[theorem]{Lemma}

\newtheorem{proposition}[theorem]{Proposition}
\newtheorem{example}[theorem]{Example}

\theoremstyle{definition}

\newtheorem{definition}[theorem]{Definition}

\def\C{\mathbb C}

\def\R{\mathbb R}

\def\N{\mathbb N}

\def\Z{\mathbb Z}

\def\D{\mathbb D}

\def\bfJ{{\boldsymbol J}}
\def\bfS{{\boldsymbol S}}
\def\bfT{{\boldsymbol T}}

\def\bfr{{\boldsymbol r}}

\def\bfa{{\boldsymbol a}}

\def\bfN{{\boldsymbol N}}

\def\bfa{{\boldsymbol a}}

\def\bfh{{\boldsymbol h}}

\def\bfw{{\boldsymbol w}}

\def\bfz{{\boldsymbol z}}

\def\bfalpha{{\boldsymbol \alpha}}

\def\bfphi{{\boldsymbol \phi}}

\def\bfeta{{\boldsymbol \eta}}

\def\bfnu{{\boldsymbol \nu}}
\def\bfomega{{\boldsymbol \omega}}
\def\bfrho{{\boldsymbol \rho}}
\def\bftau{{\boldsymbol \tau}}
\def\bfkappa{{\boldsymbol \kappa}}

\def\({{\rm (}}
\def\){{\rm )}}

\begin{document}

\numberwithin{equation}{section}

\title{Analyticity, superoscillations and supershifts in several variables}

\author[F. Colombo]{F. Colombo}
\address{(FC) Politecnico di
Milano\\Dipartimento di Matematica\\Via E. Bonardi, 9\\20133 Milano\\Italy}
\email{fabrizio.colombo@polimi.it}

\author[I. Sabadini]{I. Sabadini}
\address{(IS) Politecnico di
Milano\\Dipartimento di Matematica\\Via E. Bonardi, 9\\20133 Milano\\Italy}
\email{irene.sabadini@polimi.it}

\author[D. C. Struppa]{D. C. Struppa}
\address{(DCS) The Donald Bren Presidential Chair in Mathematics\\ Chapman University, Orange, CA 92866 \\ USA}
\email{struppa@chapman.edu}

\author[A. Yger]{A. Yger}
\address{(AY) IMB, Universit\'e de Bordeaux, 33405, Talence, France}
\email{yger@math.u-bordeaux.fr}
\date{\today}


\begin{abstract}

Superoscillations have roots in various scientific disciplines, including optics, signal processing, radar theory, and quantum mechanics. This intriguing mathematical phenomenon permits specific functions to oscillate at a rate surpassing their highest Fourier component.
A more encompassing concept, supershifts, extends the idea of superoscillations to functions that are not sum of exponential functions. This broader notion is linked to Bernstein and Lagrange approximation of analytic functions in $\mathbb{C}^n$. Recent advancements in the theory of superoscillations and supershifts in one variable have focused on their time evolution. This paper takes a step further by expanding the notion of supershifts to include the case of several variables. We provide specific examples related to harmonic analysis where the variables vary in multi-dimensional frequency (space, or scale) domains.

\end{abstract}
\maketitle

\noindent{\bf Keywords}. Multi-D supershift; multi-D superoscillation; Bernstein and Lagrange approximations.

\noindent{\bf AMS classification}. 42C10, 26E05

\section{Introduction}
Superoscillations, both a physical phenomenon and a mathematical concept, refer to functions or
sequences exhibiting the intriguing ability to oscillate at a rate surpassing what their highest
Fourier component would conventionally allow.
This unique property holds significant promise for scientific and technological progress, see \cite{Be19,QS20,OPTICS,SPGR,KM1,sodakemp}. These functions manifest in various contexts, notably in quantum mechanics \cite{aav,abook}, where they emerge from weak measurements. Intriguing questions arise regarding the evolution of these functions governed by Quantum Fields Equations.

Recently, there has been widespread research on the evolution of superoscillations as initial conditions for Schr\"odinger equations \cite{ABCS22,JDE,ACSST17A,b4,b5,peter,Pozzi,SPIN} leading to the emergence of new inquiries and questions.

This development serves as a vital link between the mathematical understanding and practical
applications of these field of studies, offering innovative insights across diverse fields, not only in
quantum mechanics. Our primary objective in this paper is to delve into the mathematical theory of these functions in several variables where also recently there has been a significant progress, with a broader focus on supershifts. The concept of supershift generalizes that of
superoscillations, which is a specific case.

Consider a real number $a>1$: the prototypical superoscillatory sequence, initially introduced in
the context of weak values theory, consists of the complex-valued functions ${F_N(x,a)}$ defined on
the real line $\mathbb{R}$ by
\begin{equation}\label{FNEXP}
F_N(x,a)=\sum_{\nu=0}^N{N\choose \nu}\left(\frac{1-a}{2}\right)^{\nu}\left(\frac{1+a}{2}\right)^{N-\nu} e^{i(1-2\nu/N){x}}
\end{equation}
where ${N\choose \nu}$ denotes the binomial coefficients.
The initial observation is that by fixing $x \in \mathbb{R}$ and allowing $N$ to approach infinity, we easily deduce that $F_N(x,a)$ converges to $e^{iax}$ uniformly for $x$ belonging to compact sets in $\mathbb{R}$.
The class of superoscillating functions can be broadened by extending both the set of coefficients ${N\choose \nu}\left(\frac{1-a}{2}\right)^{\nu}\left(\frac{1+a}{2}\right)^{N-\nu}$ and the sequence of frequencies $h(N,\nu)=1-2\nu/N$ ensuring that they are bounded by 1. A more extensive collection of superoscillating functions, as outlined in \cite{ACSSST21} and further studies in \cite{BOREL,cssy23,IRREGULAR-SAMP}, is defined under the condition that the points $h(N,\nu)$ for $\nu=0,...,N$ are distinct and so the functions have an explicit form given by
$$
f_N(x)=\sum_{\nu=0}^N\Big(\prod_{\nu'=0,\  \nu'\not=\nu}^N\Big(\frac{a- h(N,\nu')}{h(N,\nu)-h(N,\nu')}\Big)\Big)\ e^{ih(N,\nu) x},\ \ \ x\in \mathbb{R}.
$$

Based on the previously outlined families of superoscillatory functions, we can establish the definitions for generalized Fourier series and superoscillating functions.

We define a \textit{generalized Fourier sequence} as a sequence of the form
\begin{equation}\label{basic_sequence}
f_N(x):= \sum_{\nu=0}^N X_\nu(N,a)e^{ih(N,\nu)x},\ \ \ N\in \mathbb{N},\ \ \ x\in \mathbb{R},
\end{equation}
where $a\in\mathbb R$, $X_\nu(N,a)$ and $h(N,\nu)$
are complex and real-valued functions of the variables $N,a$ and $N$, respectively.
A generalized Fourier sequence, as expressed in (\ref{basic_sequence}),
is termed \textit{a superoscillating sequence} if
$\sup_{\nu,N}|h(N,\nu)|\leq 1$ and
there exists a compact subset of $\mathbb R$,
referred to as \textit{a superoscillation set},
on which $f_N(x)$ converges uniformly to $e^{ig(a)x}$,
where $g$ is a continuous real-valued function with $|g(a)|>1$.

The first case we studied, of course, arises when considering the Cauchy problem for the Schr\"odinger equation in the case of a free particle, see \cite{ACSST17A}:
\begin{equation}\label{CPSR}
i\frac{\partial \psi(x,t)}{\partial t}=-\frac{\partial^2 \psi(x,t)}{\partial x^2},\ \ \ \ \psi(x,0)=F_N(x,a).
\end{equation}
As one can immediately verify that the solution $\psi_N(x,t)$, is given by
\begin{equation}\label{CPSRSol}
\psi_N(x,t)=\sum_{\nu=0}^N{N\choose \nu}\left(\frac{1-a}{2}\right)^{\nu}\left(\frac{1+a}{2}\right)^{N-\nu} e^{i(1-2\nu /N) x } e^{-it(1-2\nu /N)^2}.
\end{equation}
This simple example shows that a theory of superoscillatory functions in several variables exists.

Indeed, in the papers \cite{ACJSSST22,CPSS23}, we have extended the aforementioned example by considering analytic functions in one variable, denoted as $G_1,\ldots, G_m$ for $m\geq2$, whose Taylor series at zero have a radius of convergence greater than or equal to 1. Consequently, we define general superoscillating functions of several variables as expressions of the form
$$
F_N(x_1,x_2,\ldots,x_d):=\sum_{\nu=0}^N Z_\nu(N,a)e^{ix_1 G_1(h(N,\nu))}e^{ix_2G_2(h(N,\nu))} \ldots e^{ix_m G_m(h(N,\nu))}
$$
Here, $Z_\nu(N,a)$, for $j=0,...,N$ and $N\in \mathbb{N}_0$, represents suitable coefficients of a superoscillating function in one variable.
We have established conditions on the functions $G_1,\ldots, G_m$ such that
$$
\lim_{N\to \infty}F_N(x_1,x_2,\ldots,x_d)=e^{ix_1 G_1(a) }e^{ix_2 G_2(a) }\ldots e^{ix_m G_m(a)},
$$
ensuring that when $|a|>1$, $F_N(x_1,x_2,\ldots,x_m)$ exhibits superoscillatory behavior. Additionally, we will address the scenario of sequences that admit a supershift in $m\geq 2$ variables.

A key development in the last couple of year has been the understanding of superoscillations and analyticity which was developed in the papers \cite{cssy23,IRREGULAR-SAMP}.
In those papers we make explicit the connection between the phenomenon of supershift and different interpolation techniques.
In particular, we use two interpolation theories namely the Legendre and the Bernstein polynomial interpolation.

More specifically, we use a classical result due to Serge Bernstein to
show that real analyticity for a complex valued function implies a strong form of supershift. On the other hand, a
parametric version of a result by
Leonid Kantorovitch shows that the converse is not true.
We also show that the restriction to $\R$ of any entire function displays supershift, whereas the converse is, in general, not true.

In this paper we push forward this analysis and we make explicitly the connection between supershift and the expected value of a family of independent random variables. This fact allows us to extend the ideas in \cite{cssy23,IRREGULAR-SAMP} to the case of several variables, using once again both Legendre and Bernstein approximation. This is a complete novel approach to the study of supershift in several variables and like in the variable case we are able to recuperate between analyticity and supershift.

The plan of the paper is as follows. In Section 2 we make explicit the relation between the classical superoscillating sequence and random variables via Bernstein approximation. In Section 3 we devote our attention to the specific case of several variables of the approximation results of Serge Bernstein. Multi-variable Lagrange interpolation is discussed and utilized to once again obtain a supershift phenomenon in the multi-variable case. Key tools are the multi-variable residue formula and the new notion of $\mathbf{T}$-predictability which provides a natural extension to this framework of the condition to establish supershift in one variable. In the course of the proof of Theorem \ref{sect3-thm1} we discover an autosimilarity phenomenon that we call Bernstein $\bfrho$-pseudo-autosimilarity. In Section 5 we apply these ideas to some examples of interest in harmonic analysis where we consider complex-valued signals and the $d$-variables vary in the frequency domain or the space domain or the scale domain. The examples are elaborated starting from extensions from the real to the complex case of elementary atoms in Fourier, Gabor, Fourier-Wigner-Ville analysis and also time-scale-frequency analysis. In particular, we show in Proposition \ref {SECT5-prop2} how a translated, scaled and modulated atom, like the Shannon's or Meyer's wavelet, evolves via the Schr\"odinger equation  of the free particle.

\section{Bernstein approximation and probabilities }\label{sect1}

For each $\bfa = (a_1,...,a_m) \in [-1,1]^m$, let $(X_{a_1,n})_{n\geq 1}$,...
, $(X_{a_m,n})_{n\geq 1}$ be $m$ independent sequences of independent random variables with
\begin{equation*}
\mathbb P(\{X_{a_j,n}=-1\}) = \frac{1-a_j}{2},\quad \mathbb P(\{X_{a_j,n}=1\}) = \frac{1+a_j}{2}
\end{equation*}
for any $j=1,...,m$ and $n\in \N^*$.
Let also $({\bf R},{\bf d})$ be a metric space.
For any continuous function $\phi: [-1,1]^m \times {\bf R} \rightarrow \C$, any $\bfN = (N_1,...,N_m) \in (\N^*)^m$ and any $x\in {\bf R}$, one has then
\begin{equation}\label{sect1-eq1}
\mathbb E \Big(\phi\, \Big( \frac{\bfS_{\bfa,\bfN}}{\bfN}, x \Big)\Big) =
\sum_{\bfnu \prec \bfN} \binom{\bfN}{\bfnu}
\, \Big(\frac{1-\bfa}{2}\Big)^{\bfnu} \, \Big(\frac{1+\bfa}{2}\Big)^{\bfN - \bfnu}\, \phi
\Big( {\bf 1} - 2\, \frac{\bfnu}{\bfN}\Big),
\end{equation}
where $\bfnu \prec \bfN$ means $0\leq \nu_j \leq N_j$ for $j=1,...,m$,
$$
{\bf 1} - 2 \, \bfnu/\bfN := (1-2 \, \nu_1/N_1,...,1-2\, \nu_m/N_m),
$$
and
\begin{equation}\label{sect1-eq2}
\begin{split}
S_{a_j,N_j}:= \sum_{n=1}^{N_j} X_{a_j,n} \quad (j=1,...,m)\ &,\
\frac{\bfS_{\bfa,\bfN}}{\bfN} := \Big( \frac{S_{a_1,N_1}}{N_1},...,\frac{S_{a_m,N_m}}{N_m}\Big),  \\
\binom{\bfN}{\bfnu}
\, \Big(\frac{1-\bfa}{2}\Big)^{\bfnu} \, \Big(\frac{1+\bfa}{2}\Big)^{\bfN - \bfnu} & :=
\prod_{j=1}^m  \binom{N_j}{\nu_j}
\, \Big(\frac{1-a_j}{2}\Big)^{\nu_j} \, \Big(\frac{1+a_j}{2}\Big)^{N_j - \nu_j}.
\end{split}
\end{equation}
Since $\mathbb E[X_{a_j,\nu_j}] = a_j$ and $\sigma^2 (X_{a_j,\nu_j}) = 1 -a_j^2$ for any $j=1,...,m$ and $\nu_j = 1,...,N_j$, Chebychev's inequality implies that for any $\eta>0$, $j=1,...,m$, and $N_j\in \N^*$
$$
\mathbb P \Big(
\Big\{\Big|
\frac{S_{a_j,N_j}}{N_j} - a_j\Big|
\geq \eta \Big\}\Big) \leq \frac{ N_j\, ( 1-a_j^2)}{ N^2_j\, \eta^2} = \frac{1}{N_j}\, \frac{1}{\eta^2}.
$$
Given a compact subset ${\rm K} \subset {\bf R}$, one has then for any $\bfa \in [-1,1]^m$ and
$x\in {\rm K}$ that
\begin{equation*}
\begin{split}
\Big|\mathbb E \Big(\phi\, \Big(\frac{\bfS_{\bfa,\bfN}}{\bfN}, x \Big)\Big)- \phi\, (\bfa,x)\Big| & \leq \mathbb E \Big( \Big|  \phi\, \Big(\frac{\bfS_{\bfa,\bfN}}{\bfN} , x \Big) - \phi\, (\bfa,x)\Big|
\, {\bf 1}_{\bigcap_{1\leq j\leq m} \big\{\big|\frac{S_{a_j,N_j}}{N_j} - a_j\big| \leq \eta\big\}}\Big) \\
&\qquad  + 2 \sup_{\stackrel{\bfa \in [-1,1]^m}{x\in {\rm K}}}
|\phi\, (\bfa,x)|\, \sum_{j=1}^m \mathbb P \Big(
\Big\{\, \Big|
\frac{S_{a_j,N_j}}{N_j} - a_j\Big|
\geq \eta \Big\}\Big) \\
& \leq \mathbb E \Big( \Big|  \phi\, \Big(\frac{\bfS_{\bfa,\bfN}}{\bfN} , x \Big) - \phi\, (\bfa,x)\Big|
\, {\bf 1}_{\bigcap_{1\leq j\leq m} \big\{\big|\frac{S_{a_j,N_j}}{N_j} - a_j\big| \leq \eta\big\}}\Big) \\
& \qquad + 2\, \frac{m}{\eta^2\, \min_j N_j}\, \sup_{\stackrel{\bfa \in [-1,1]^m}{x\in {\rm K}}}
|\phi\, (\bfa,x)|.
\end{split}
\end{equation*}
It follows then from the uniform continuity of $\phi$ on $[-1,1]^m \times {\rm K}$, where
${\rm K} \subset \subset {\bf R}$, that
\begin{equation}\label{sect1-eq2bis}
\lim\limits_{\min_{j} N_j \rightarrow +\infty}
\sup\limits_{\stackrel{\bfa \in [-1,1]^m}{x
\in {\rm K}}} \Big|\mathbb E \Big(\phi\, \Big(\frac{\bfS_{\bfa,\bfN}}{\bfN}, x \Big)\Big)- \phi\, (\bfa,x)\Big| =0,
\end{equation}
which is a fundamental result established by S. Bernstein in \cite{Bern12}, see also
\cite[\textsection 3]{Kow16} for the multivariate approach described here.
It follows in particular from \eqref{sect1-eq1} that for any $\bfa \in [-1,1]^m$ and $\bfkappa \in \N^m$
\begin{equation}\label{sect1-eq3}
\begin{split}
\mathbb E \Big(\Big( \frac{\bfS_{\bfa,\bfN}}{\bfN}\Big)^{\bfkappa}\Big)
& = \mathbb E \Big(\prod_{j=1}^m
\Big(\frac{S_{a_j,N_j}}{N_j}\Big)^{\kappa_j}\Big)  =
\prod_{j=1}^m \mathbb E
\Big(\Big(\frac{S_{a_j,N_j}}{N_j}\Big)^{\kappa_j}\Big) \\
& =  \prod_{j=1}^m
\Big(\sum_{\nu = 0}^{N_j} \binom{N_j}{\nu}
\, \Big(\frac{1-a_j}{2}\Big)^{\nu} \, \Big(\frac{1+a_j}{2}\Big)^{N_j - \nu}
\Big( 1 - 2\, \frac{\nu}{N_j}\Big)^{\kappa_j}\Big),
\end{split}
\end{equation}
so that the following identity in $\R[[\bfz]] = \R[[z_1,...,z_m]]$ holds for any $\bfw \in [-1,1]^m$:
\begin{equation}\label{sect1-eq4}
\begin{split}
\sum_{\bfkappa \in \N^m}
\mathbb E \Big(\Big( \frac{\bfS_{\bfw,\bfN}}{\bfN}\Big)^{\bfkappa}\Big)\, \frac{\bfz^{\bfkappa}}{\bfkappa!}
& = \prod_{j=1}^m \Big(\cosh \frac{z_j}{N_j} + w_j\, \sinh \, \frac{z_j}{N_j}\Big)^{N_j} \\
& = \prod_{j=1}^m \Big( \sum_{\kappa =0}^\infty \Big(\frac{1+ w_j}{2} + (-1)^\kappa \frac{1-w_j}{2}\Big)\,
\frac{1}{\kappa!}\, \Big(\frac{z_j}{N_j}\Big)^\kappa\Big)^{N_j}.
\end{split}
\end{equation}
Observe that the right-hand side in the first line of \eqref{sect1-eq4} defines, as a function of $\bfw$ and $\bfz$, both now considered in $\C^m$, an entire function in $2m$ variables.
Since
$$
\Big|\frac{1+ w_j}{2} + (-1)^\kappa \frac{1-w_j}{2}\Big|
\leq (\max (1,|w_j|))^\kappa
$$
for any $j=1,...,m$ and any $\kappa \in \N$, it follows from the second equality in \eqref{sect1-eq4} that
the analytic continuation to $\C^m$ of
$$
\bfa \in [-1,1]^m  \longmapsto  \mathbb E \Big(\Big( \frac{\bfS_{\bfa,\bfN}}{\bfN}\Big)^{\bfkappa}\Big),
$$
namely the polynomial map
$$
\bfw \in \C^m \longmapsto \mathbb B_{(\cdot)^{\bfkappa}}^{\bfN} \, (\bfw) = \prod_{j=1}^m
\Big(\sum_{\nu = 0}^{N_j} \binom{N_j}{\nu}
\, \Big(\frac{1-w_j}{2}\Big)^{\nu} \, \Big(\frac{1+w_j}{2}\Big)^{N_j - \nu}
\Big( 1 - 2\, \frac{\nu}{N_j}\Big)^{\kappa_j}\Big)
$$
satisfies for any $\bfa \in \C^m$ and any $\bfN\in (\N^*)^m$
\begin{equation}\label{sect1-eq5}
\Big|\frac{ \mathbb B_{(\cdot)^{\bfkappa}}^{\bfN} \, (\bfw)}{\bfkappa !}\Big|
\leq \frac{1}{\bfkappa!} \prod_{j=1}^d (\max (1,|w_j|))^{\kappa_j} \Longrightarrow
\big|\mathbb B_{(\cdot)^{\bfkappa}}^{\bfN} \, (\bfw)\big| \leq \prod_{j=1}^m (\max (1,|w_j|))^{\kappa_j}.
\end{equation}

\section{Bernstein and Lagrange approximation in $\C^m$ }\label{sect2}

Let $({\bf R}, {\bf d})$ be a metric space. The following result extends to the multivariate parametric context a result due to Serge Bernstein in the univariate and parametric-free context \cite{Bern35}.

\begin{theorem}\label{sect2-thm1} Let $\bfr = (r_1,...,r_m)\in (]1,+\infty])^m$ and $\phi~: (\bfa,x) \longmapsto
\phi\, (\bfa,x) \in \C$ be a continuous function on $\big(\prod_{j=1}^m ]-r_j,r_j[\big) \times {\bf R}$. Suppose that there exists a continuous map $x \longmapsto f_x$ from ${\bf R}$
to $H(\mathbb D^m_{\bfr})$, where $\mathbb D^m_{\bfr} := \{\bfw \in \C^m\,:\, |w_j|< r_j\ \text{for}\ j=1,...,m\}$, such for each $x\in {\bf R}$, $\bfa \in \prod_{j=1}^m ]-r_j,r_j[ \longmapsto \phi\, (\bfa,x)$ is the restriction of $\bfw \longmapsto f_x (\bfw)$ to $\prod_{j=1}^m ]-r_j,r_j[$. Then
\begin{equation}\label{sect2-eq0}
\phi\, (\bfa,x) = \lim\limits_{\{\bfN\in (\N^*)^m\,:\, \min \bfN \rightarrow +\infty\}}
\sum_{\bfnu \prec \bfN} \binom{\bfN}{\bfnu}
\, \Big(\frac{1-\bfa}{2}\Big)^{\bfnu} \, \Big(\frac{1+\bfa}{2}\Big)^{\bfN - \bfnu}\, \phi\, \Big({\bf 1} - 2\, \frac{\bfnu}{\bfN},x\Big)
\end{equation}
uniformly in $(\bfa,x)$ on any compact subset of $\big(\prod_{j=1}^m ]-r_j,r_j[\big) \times
{\bf R}$.
\end{theorem}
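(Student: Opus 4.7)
The strategy is to reduce the multivariate convergence to the classical univariate Bernstein approximation, monomial by monomial, by expanding $f_x$ in its Taylor series at the origin, substituting into the finite Bernstein sum \eqref{sect1-eq1}, and controlling the tail uniformly by the estimate \eqref{sect1-eq5} already established in Section~\ref{sect1}.

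By the continuity hypothesis $x\mapsto f_x\in H(\mathbb D^m_\bfr)$, for every compact $K\subset{\bf R}$ and every $\bfrho=(\rho_1,\dots,\rho_m)$ with $\rho_j<r_j$ one has
\[
M_K(\bfrho):=\sup_{x\in K}\,\sup_{|w_j|\leq\rho_j,\ j=1,\dots,m}|f_x(\bfw)|<\infty,
\]
so Cauchy's estimates on polycircles of radii $\bfrho$ bound the Taylor coefficients of $f_x$ at $0$ by $\sup_{x\in K}|c_\bfkappa(x)|\leq M_K(\bfrho)/\bfrho^\bfkappa$. Since $r_j>1$ for every $j$, the series $f_x(\bfw)=\sum_\bfkappa c_\bfkappa(x)\,\bfw^\bfkappa$ converges absolutely and uniformly on the closed polybox $[-1,1]^m$; as the sample points ${\bf 1}-2\bfnu/\bfN$ lie in $[-1,1]^m$, substituting this expansion into \eqref{sect1-eq1} and exchanging the finite outer sum over $\bfnu$ with the absolutely convergent inner series over $\bfkappa$ yields
\[
\mathbb E\Big[\phi\Big(\frac{\bfS_{\bfa,\bfN}}{\bfN},x\Big)\Big]=\sum_{\bfkappa\in\N^m}c_\bfkappa(x)\,\mathbb B^\bfN_{(\cdot)^\bfkappa}(\bfa).
\]

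Fix now a compact $L\subset\prod_{j=1}^m\,]-r_j,r_j[$, set $A_j:=\sup_{\bfa\in L}|a_j|<r_j$, and choose $\bfrho$ with $\max(1,A_j)<\rho_j<r_j$ for every $j$: such a choice exists precisely because $r_j>1$ and $A_j<r_j$. The bound \eqref{sect1-eq5} gives, uniformly in $\bfN\in(\N^*)^m$, $\bfa\in L$ and $x\in K$,
\[
\big|c_\bfkappa(x)\,\mathbb B^\bfN_{(\cdot)^\bfkappa}(\bfa)\big|\leq M_K(\bfrho)\prod_{j=1}^m\Big(\frac{\max(1,A_j)}{\rho_j}\Big)^{\kappa_j},
\]
which is a summable geometric series in $\bfkappa$. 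For each fixed $\bfkappa$, the product decomposition $\mathbb B^\bfN_{(\cdot)^\bfkappa}(\bfa)=\prod_{j=1}^m\mathbb B^{N_j}_{(\cdot)^{\kappa_j}}(a_j)$ reduces matters to the one-dimensional setting: each factor is a polynomial in $a_j$ of degree at most $\kappa_j$ whose coefficients converge, as $N_j\to+\infty$, to those of $a_j^{\kappa_j}$, so $\mathbb B^\bfN_{(\cdot)^\bfkappa}(\bfa)\to\bfa^\bfkappa$ uniformly on $L$ when $\min_jN_j\to+\infty$. A Weierstrass $M$-test then justifies interchanging limit and sum, giving
\[
\lim_{\min_jN_j\to+\infty}\mathbb E\Big[\phi\Big(\frac{\bfS_{\bfa,\bfN}}{\bfN},x\Big)\Big]=\sum_{\bfkappa\in\N^m}c_\bfkappa(x)\,\bfa^\bfkappa=f_x(\bfa)=\phi(\bfa,x),
\]
with the convergence uniform on $L\times K$, which is \eqref{sect2-eq0}.

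The main delicate point is the simultaneous choice of radii $\bfrho$: we need $\rho_j>1$ so that the factor $(\max(1,|a_j|))^{\kappa_j}$ arising from \eqref{sect1-eq5} is compensated by $\rho_j^{-\kappa_j}$ from the Cauchy bound, $\rho_j>A_j$ to force geometric decay in $\kappa_j$, and $\rho_j<r_j$ for the Cauchy estimates to remain available. The strict hypothesis $r_j>1$ is exactly what makes such a $\bfrho$ feasible on every compact $L\subset\prod_{j=1}^m\,]-r_j,r_j[$, and hence what guarantees that the tail of the Taylor expansion is uniformly dominated by a fixed convergent series, independently of $\bfa\in L$, $x\in K$ and $\bfN\in(\N^*)^m$.
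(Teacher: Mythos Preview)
Your proof is correct, and its setup coincides with the paper's: both expand $f_x$ as a Taylor series, substitute into the Bernstein sum, and invoke the monomial bound \eqref{sect1-eq5} together with Cauchy estimates to produce a uniform summable majorant. The divergence is in how the limit is then taken. The paper uses the majorant only to show that the family of Bernstein sums is bounded in $H(\mathbb D^m_\bfr)$, checks equicontinuity in $x$, quotes the classical Bernstein result \eqref{sect1-eq2bis} on $[-1,1]^m\times K$, and then applies the multivariate Vitali--Montel theorem to push the convergence from $[-1,1]^m$ out to all of $\mathbb D^m_\bfr$. You instead argue termwise: the observation that each factor $\mathbb B^{N_j}_{(\cdot)^{\kappa_j}}(a_j)$ is a polynomial in $a_j$ of degree at most $\kappa_j$ means that its uniform convergence to $a_j^{\kappa_j}$ on $[-1,1]$ forces convergence of coefficients (finite-dimensionality), hence uniform convergence on any compact $L\subset\R$; the Weierstrass $M$-test then finishes the job. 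Your route is more elementary, bypassing normal families and the separate equicontinuity check in $x$; the paper's route, on the other hand, delivers the slightly stronger conclusion that the convergence actually holds in $H(\mathbb D^m_\bfr)$, i.e.\ uniformly on compact subsets of the full complex polydisc and not only on the real polybox.
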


\begin{proof}
Let
$$
f_x(\bfw)= \sum\limits_{\bfkappa \in \N^m} \gamma_{\bfkappa} (x)\, \bfw^\kappa
$$
be the Taylor expansion of $\bfw \mapsto f(\bfw,x)$ about the origin in $\C^m$. For any
$\bfrho \in \prod_{j=1}^m [0,r_j[$ and ${\rm K} \subset \subset {\bf R}$, it follows from the hypothesis, together with Cauchy inequalities, that
\begin{equation}\label{sect2-eq1}
M (\bfrho,{\rm K}) = \sup_{x \in {\rm K}} \sum_{\kappa \in \N^m} |\gamma_{\bfkappa} (x)|\,
\prod_{j=1}^m (\max (1,\rho_j))^{\kappa_j} < +\infty.
\end{equation}
If one plots in the right-hand side of \eqref{sect2-eq1} the inequalities \eqref{sect1-eq5}, one gets that
\begin{equation}\label{sect2-eq1bis}
\sup\limits_{\stackrel{\bfN \in (\N^*)^m}{x \in {\rm K}}}
\sum\limits_{\bfkappa \in \N^m} |\gamma_{\bfkappa} (x)| \,
\big|\mathbb B_{(\cdot)^{\bfkappa}}^{\bfN} \, (\bfw)\big| \leq M(\bfrho,{\rm K}).
\end{equation}
As a consequence, all multi-series (of functions in the variables $\bfw$ and $x$)
\begin{equation}\label{sect2-eq2}
\begin{split}
\sum_{\bfkappa \in \N^m} \gamma_{\bfkappa} (x)\, \mathbb B_{(\cdot)^{\bfkappa}}^{\bfN} \, (\bfw)
& = \sum_{\bfnu \prec \bfN} \binom{\bfN}{\bfnu}
\, \Big(\frac{1-\bfw}{2}\Big)^{\bfnu} \, \Big(\frac{1+\bfw}{2}\Big)^{\bfN - \bfnu}
\, \Big( \sum_{\bfkappa \in \N^m}
\gamma_{\bfkappa} (x) \,
\Big({\bf 1} - 2\, \frac{\bfnu}{\bfN}\Big)^{\bfkappa}\Big) \\
& = \sum_{\bfnu \prec \bfN} \binom{\bfN}{\bfnu}
\, \Big(\frac{1-\bfw}{2}\Big)^{\bfnu} \, \Big(\frac{1+\bfw}{2}\Big)^{\bfN - \bfnu}\, \phi\, \Big({\bf 1} - 2\, \frac{\bfnu}{\bfN},x\Big),
\end{split}
\end{equation}
where $\bfN \in (\N^*)^m$, converge normally on every compact subset of $\mathbb D^m_{\bfr} \times {\bf R}$.
Moreover, one has
\begin{equation}\label{sect2-eq3}
\Big|\sum_{\bfnu \prec \bfN} \binom{\bfN}{\bfnu}
\, \Big(\frac{1-\bfw}{2}\Big)^{\bfnu} \, \Big(\frac{1+\bfw}{2}\Big)^{\bfN - \bfnu}\, \phi\, \Big({\bf 1} - 2\, \frac{\bfnu}{\bfN},x\Big)\Big|
\leq M(|\bfw|, {\rm K})
\end{equation}
for any $\bfN\in (\N^*)^m$ and any ${\rm K} \subset \subset {\bf R}$, where $|\bfw| = (|w_1|,...,|w_m|)$.
This implies that the family
\begin{equation}\label{sect2-eq3bis}
\Big\{w \longmapsto \sum_{\bfnu \prec \bfN} \binom{\bfN}{\bfnu}
\, \Big(\frac{1-\bfw}{2}\Big)^{\bfnu} \, \Big(\frac{1+\bfw}{2}\Big)^{\bfN - \bfnu}\, \phi\, \Big({\bf 1} - 2\, \frac{\bfnu}{\bfN},x\Big)\,:\,
\bfN \in (\N^*)^m,\ x \in {\rm K}\Big\}
\end{equation}
is bounded in $H(\mathbb D_{\bfr}^m)$.
Let $\max ({\bf 1},\bfrho) \prec \bfrho\,'\prec \bfr$, which means $\max (1,\rho_j) < \rho_j\,' < r_j$ for $j=1,...,m$,
${\rm K} \subset \subset {\bf R}$ and $\varepsilon>0$.
Since the map $x \in {\bf R} \longmapsto f_x
\in H(\mathbb D^m_\bfr)$ is uniformly continuous on any compact subset of ${\bf R}$, there exists
$\eta\, ({\rm K},\varepsilon)>0$ such that
\begin{equation}\label{sect2-eq3ter}
x, {\rm y} \in {\rm K}\ \text{and}\ {\bf d}\, (x, {\rm y}) < \eta \, ({\rm K},\varepsilon)
\Longrightarrow \sum\limits_{\kappa \in \N^m}
|\gamma_{\kappa} (x) - \gamma_\kappa ({\rm y})|^2\,
(\bfrho')^{2\bfkappa} \leq \varepsilon^2.
\end{equation}
For any $\bfw \in \overline{\mathbb D^m_{\bfrho}}$, any $x,{\rm y} \in {\rm K}$ with
${\bf d}\, (x,{\rm y}) < \eta\, ({\rm K}, \varepsilon)$ and any $\bfN\in (\N^*)^m$, it follows from
\eqref{sect2-eq3ter} and the previous considerations that
\begin{multline*}
\Big|
\sum_{\bfnu \prec \bfN} \binom{\bfN}{\bfnu}
\, \Big(\frac{1-\bfw}{2}\Big)^{\bfnu} \, \Big(\frac{1+\bfw}{2}\Big)^{\bfN - \bfnu}\,
\Big( \phi\, \Big({\bf 1} - 2\, \frac{\bfnu}{\bfN},x\Big) -
\phi\, \Big({\bf 1} - 2\, \frac{\bfnu}{\bfN},{\rm y}\Big)\Big)\Big| \\
\leq \varepsilon \, \Big({\sum\limits_{\bfkappa \in \N^m} \Big(\frac{\max ({\bf 1},\bfrho)}{\bfrho'}\Big)^{2\bfkappa}}\Big)^{1/2} = \varepsilon  \, \Big({\prod_{j=1}^m \frac{1}{1 - ((\max(1,\rho_j))/\rho_j\,')^2}}\Big)^{1/2}.
\end{multline*}
Since $[-1,1]^m$ is not contained in any complex hypersurface of $\C^m$ and
\begin{equation}\label{sect2-eq4}
\lim\limits_{\min_{j} N_j \rightarrow +\infty}
\sum_{\bfnu \prec \bfN} \binom{\bfN}{\bfnu}
\, \Big(\frac{1-\bfw}{2}\Big)^{\bfnu} \, \Big(\frac{1+\bfw}{2}\Big)^{\bfN - \bfnu}\, \phi\, \Big({\bf 1} - 2\, \frac{\bfnu}{\bfN},x\Big) = \bfphi(\bfw,x)
\end{equation}
uniformly in $(\bfw,x)$ on any compact subset of $[-1,1]^m \times {\bf R}$ according to
\eqref{sect1-eq2bis}, it follows from Vitali-Montel theorem in the multivariate setting, see for example
\cite[\textsection 1]{DMBH16}, that \eqref{sect2-eq4} holds uniformly in $(\bfw,x)$ on any compact subset of
$\mathbb D_{\bfr}^m \times {\bf R}$. It remains to restrict such convergence to the intersection of
$\mathbb D_{\bfr}^m \times {\bf R}$ with $\R^d \times {\bf R}$, namely to
$\big(\prod_{j=1}^m ]-r_j,r_j[\big) \times {\bf R}$, in order to get the required assertion.
\end{proof}

\begin{example}\label{sect2-expl1}
{\rm Let $m=1$, $G_1,...,G_d$ be univariate holomorphic functions respectively in the discs $\mathbb D\,(0,R_1)$,...,$\mathbb D\,(0,R_d)$, $U_{d,\boldsymbol R} := \{(a,x)\in \R^{d+1}\,:\, |a\, x_\ell| < R_\ell \ \text{for}\ \ell =1,...,d\}$.
Theorem \ref{sect2-thm1} (when $m=1$) implies that the sequence of functions
$$
\Big( (a,x) \in U_{d,\boldsymbol R} \longmapsto
\sum\limits_{\nu=0}^N
\binom{N}{\nu} \, \Big(\frac{1- a}{2}\Big)^\nu \Big(\frac{1+a}{2}\Big)^{N-\nu}\, \prod_{\ell=1}^d
G_\ell\, ((1-2\, \nu/N)\, x_\ell) \Big)_{N\geq 1}
$$
converges uniformly on any compact subset of $U_{d,\boldsymbol R}$ to 
$(a,x) \longmapsto \prod_{\ell=1}^d G_\ell (a\, x_\ell)$. This corresponds to
\cite[Example 4.9 (1)]{CPSS23}. More generally, let $q_1,...,q_d \in \N^*$ and
$$
U_{d,\boldsymbol q,\boldsymbol R} := \big\{(a,x)\in \R^{d+1}\,:\, |a|^{q_\ell}\, |x_\ell|< R_\ell\ \text{for}\ \ell =1,...,d\big\}.
$$
Theorem \ref{sect2-thm1} (still when $m=1$) implies that the sequence of continuous functions
$$
\Big( (a,x) \in U_{d,\boldsymbol q,\boldsymbol R} \longmapsto
\sum\limits_{\nu=0}^N
\binom{N}{\nu} \, \Big(\frac{1- a}{2}\Big)^\nu \Big(\frac{1+ a}{2}\Big)^{N-\nu}\, \prod_{\ell=1}^d
G_\ell\, ((1-2\, \nu/N)^{q_\ell}\, x_\ell) \Big)_{N\geq 1}
$$
converges uniformly on any compact subset of $U_{d,\boldsymbol q,\boldsymbol R}$ to the continuous function
$(a,x) \longmapsto \prod_{\ell=1}^d G_\ell (a \, x_\ell)$, see \cite[Theorem 3.9]{ACJSSST22} for what concerns the particular important case where $R_\ell =+\infty$, $G_\ell (w) = \exp (iw)$ for any $\ell$.}
\end{example}

For each $\bfN =(N_1,...,N_m) \in (\N^*)^m$ and $j=1,...,m$, let $(h_j(N_j,\nu))_{0\leq \nu \leq N_j}$ be a strictly increasing sequence of elements in $[-1,1]$. For each $\bfnu\prec \bfN$, let
$$
\bfh (\bfN,\bfnu) := (h_1(N_1,\nu_1),...,h_m(N_m,\nu_m)) \in [-1,1]^m.
$$
The following result also holds.

\begin{theorem}\label{sect2-thm2} Let $\bfr = (r_1,...,r_m)\in (]1,+\infty])^m$ and $\phi~: (\bfa,x) \longmapsto
\phi\, (\bfa,x) \in \C$ be a continuous function on $\big(\prod_{j=1}^m ]-r_j,r_j[\big) \times {\bf R}$. Suppose that, for each
$x\in {\bf R}$, $\bfa \longmapsto \phi(\bfa,x)$ is the restriction to $\prod_{j=1}^m ]-r_j,r_j[$ of a complex valued function $f_x$ which is defined and holomorphic in the polydisc
$$
\mathbb D_{\bfr + {\bf 2}}^m =
\{\bfw \in \C^d\,:\, |w_j|< r_j+2\ \text{for}\ j=1,...,m\},
$$
such that, for any ${\rm K}\subset \subset {\bf R}$, $\{f_x\,:\, x \in {\rm K}\}$ is a bounded family in $H (\mathbb D^m_{\bfr + {\bf 2}})$. Then
\begin{equation}\label{sect2-eq5}
\phi\, (\bfa,x) = \lim\limits_{\{\bfN \in (\N^*)^m\,:\, \min \bfN\rightarrow +\infty\}}
\sum\limits_{\bfnu \prec \bfN} \prod_{j=1}^m \prod_{\stackrel{\nu=0}{\nu\not=\nu_j}}^{N_j}
\frac{a_j - h_j(N_j,\nu)}{h_j(N_j,\nu_j) - h_j(N_j,\nu)}\, \phi\,  ({\bfh} (\bfN,\bfnu),x)
\end{equation}
uniformly in $(\bfa,x)$ on any compact subset of $\big(\prod_{j=1}^m ]-r_j,r_j[\big) \times
{\bf R}$.
\end{theorem}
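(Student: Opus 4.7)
The plan is to recognise the right-hand side of \eqref{sect2-eq5} as the tensor-product multivariate Lagrange interpolant $L_\bfN[f_x](\bfa)$ of $\bfa\mapsto f_x(\bfa)$ on the grid $\{\bfh(\bfN,\bfnu):\bfnu\prec\bfN\}$, and then to control the interpolation error through a multivariate Walsh--Hermite contour integral representation. The identification is immediate once one observes that the coefficient of $\phi(\bfh(\bfN,\bfnu),x)$ in \eqref{sect2-eq5} is the tensor product of univariate Lagrange basis polynomials, dual to the grid evaluations, so \eqref{sect2-eq5} is the unique polynomial of multidegree at most $\bfN$ componentwise agreeing with $f_x$ on that grid.

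Given a compact set $K\subset\big(\prod_{j=1}^m\,]-r_j,r_j[\big)\times{\bf R}$, I would select $\bfrho\prec\bfr$ bounding the $\bfa$-projection of $K$ componentwise, and then radii $R_1,\ldots,R_m$ with $\rho_j+2<R_j<r_j+2$ for each $j$. Such a choice is possible precisely because $\rho_j<r_j$, and this is the role played by the shift by $2$ in the hypothesis $\bfr+{\bf 2}$. Setting $\omega_{j,N_j}(w):=\prod_{\nu=0}^{N_j}(w-h_j(N_j,\nu))$ and iterating (via Fubini) the classical univariate Walsh--Hermite identity on each circle $|\zeta_j|=R_j$, along which $f_x$ is holomorphic by assumption, one obtains
\begin{equation*}
L_\bfN[f_x](\bfa)=\frac{1}{(2\pi i)^m}\int_{|\zeta_1|=R_1}\!\!\cdots\!\int_{|\zeta_m|=R_m}\frac{f_x(\boldsymbol\zeta)}{\prod_{j=1}^m(\zeta_j-a_j)}\prod_{j=1}^m\Big(1-\frac{\omega_{j,N_j}(a_j)}{\omega_{j,N_j}(\zeta_j)}\Big)\,d\zeta_1\cdots d\zeta_m.
\end{equation*}
Subtracting this identity from the analogous Cauchy representation of $f_x(\bfa)$ on the same polycircle yields an error representation whose integrand carries the factor $1-\prod_{j=1}^m(1-\omega_{j,N_j}(a_j)/\omega_{j,N_j}(\zeta_j))$.

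The remaining estimates are elementary. Because the nodes $h_j(N_j,\nu)$ lie in $[-1,1]$ and $R_j>1$, one has $|\omega_{j,N_j}(a_j)|\leq(\rho_j+1)^{N_j+1}$ and $|\omega_{j,N_j}(\zeta_j)|\geq(R_j-1)^{N_j+1}$ on the polycircle, hence $|\omega_{j,N_j}(a_j)/\omega_{j,N_j}(\zeta_j)|\leq\theta_j^{N_j+1}$ with $\theta_j:=(\rho_j+1)/(R_j-1)\in(0,1)$. Consequently the bracketed factor in the error integrand is majorised by $\prod_{j=1}^m(1+\theta_j^{N_j+1})-1$, which tends to $0$ as $\min\bfN\to+\infty$. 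Combined with the uniform bound on $|f_x(\boldsymbol\zeta)|$ on the polycircle supplied by the hypothesis that $\{f_x:x\in K\}$ is bounded in $H(\mathbb D^m_{\bfr+{\bf 2}})$, and with the lower bound $|\zeta_j-a_j|\geq R_j-\rho_j>0$, this produces the asserted uniform convergence on $K$ of $L_\bfN[f_x](\bfa)$ to $f_x(\bfa)=\phi(\bfa,x)$.

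The main technical obstacle is the simultaneous management of the three nested radii at play --- the node locus $[-1,1]$, the interpolation variable with $|a_j|\leq\rho_j$, and the integration polycircle $|\zeta_j|=R_j$ --- the inequality $\rho_j+2<R_j<r_j+2$ being exactly what forces $\theta_j<1$. A secondary point is establishing the multivariate Walsh--Hermite representation itself; it follows by Fubini from iterated univariate residue computations, exploiting that $(1-\omega_{j,N_j}(a_j)/\omega_{j,N_j}(\zeta_j))/(\zeta_j-a_j)$ is holomorphic at $\zeta_j=a_j$, so that only the poles at the interpolation nodes contribute to reproduce the Lagrange formula variable by variable.
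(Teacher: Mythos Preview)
Your argument is correct, and it rests on the same contour-integral representation as the paper's proof: both write $f_x(\bfa)$ via Cauchy's formula on the Shilov boundary of a polydisc of polyradius between $\bfrho+{\bf 1}$ and $\bfr+{\bf 2}$, insert the trivial identity $1=\prod_j\big((Q_{N_j}(\zeta_j)-Q_{N_j}(a_j))+Q_{N_j}(a_j)\big)/Q_{\bfN}(\boldsymbol\zeta)$ (your $\omega_{j,N_j}$ is the paper's $Q_{N_j}$), and recognise the ``all-differences'' piece as the Lagrange interpolant by residues. The genuine difference lies in how the remainder is handled. The paper expands the product into $2^m$ terms indexed by subsets $J\subseteq\{1,\dots,m\}$; for each proper nonempty $J$ it first takes residues in the $J$-variables to produce a partial interpolant $\Phi_{J,\bfN}$, and then needs a uniform bound on $\Phi_{J,\bfN}$, which it obtains from the inductive hypothesis $(A_{\#J})$. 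You bypass both the expansion and the induction by estimating the error integrand directly on the torus via the elementary inequality $\big|1-\prod_j(1-p_j)\big|\le\prod_j(1+|p_j|)-1$ with $|p_j|\le\theta_j^{N_j+1}$, $\theta_j=(\rho_j+1)/(R_j-1)<1$. This is shorter and more elementary; what the paper's inductive decomposition buys in exchange is an explicit description of the error as a superposition of partially interpolated cross-terms, each with its own geometric decay rate in the complementary variables.
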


\begin{proof} We prove the result inductively with respect to $m\in \N^*$. Namely, we prove first assertion $(A_1)$, then assertion
$(A_m)$ assuming that all assertions $(A_\mu)$ for $1\leq \mu <m$ are true. \\
Consider first the case $m=1$,
$\bfr = r_1 = r$. Let $\eta \in ]0,r[$. The hypothesis imply that, for any ${\rm K} \subset \subset {\bf R}$,
$M_{\eta} ({\rm K}) := \sup \,
\big\{|f_x (w)|\,:\, w\in \C\ {\rm with}\ |w| = r + 2 - \eta/2,\ x \in {\rm K}\big\} < +\infty$.
It follows from Cauchy formula that for any $a \in [-(r-\eta),r-\eta]$ and $x \in {\rm K}$
\begin{equation}\label{sect2-aux-eq1}
\phi\, (a, x) = f_x (a) =  \frac{1}{(2i\pi)}
\int_{\Gamma_{r + 2-\eta/2}}
f_x (w)\, \frac{\big((Q_{N}(w) - Q_{N} (a)) + Q_{N} (a)\big)}{(w-a)} \, \frac{dw}{Q_{N}(w)},
\end{equation}
where $Q_{N} (w) := \prod_{\nu=0}^{N} (w - h(N,\nu))$ for $N\in \N^*$
and $\Gamma_{r+2 -\eta/2}$ is the Shilov boundary of the disc $\D_{r+2-\eta/2} := \D(0,r+2-\eta/2)$. Residue formula leads to
\begin{multline}\label{sect2-aux-eq2}
\frac{1}{(2i\pi)}
\int_{\Gamma_{r + 2-\eta/2}}
f_x (w)\, \frac{Q_{N}(w) - Q_{N} (a)}{w-a} \, \frac{dw}{Q_{N}(w)} \\ =
\sum\limits_{\nu=0}^N \prod_{\stackrel{\nu'=0}{\nu'\not= \nu}}^N \frac{a - h(N,\nu')}{h(N,\nu) - h(N,\nu')}\, \phi\,(h(N,\nu),x),
\end{multline}
which is the expression on the right-hand side of \eqref{sect2-eq5} when $m=1$.
On the other hand,
\begin{multline}\label{sect2-aux-eq3}
\Big|
 \frac{1}{(2i\pi)}
\int_{\Gamma_{r+2-\eta/2}} f_x (w)\, \frac{Q_N(a)}{Q_N(w)}\, \frac{dw}{w-a}\Big| \\
\leq M_{\eta} ({\rm K}) \, \frac{r+2 - \eta/2}{2+ \eta/2}\, \Big(\frac{r+1-\eta}{r+1-\eta/2}\Big)^{N+1} = O_{\eta,{\rm K}}\, (e^{-\varepsilon_\eta N})
\end{multline}
for some $\varepsilon_\eta >0$. Since $\eta$ can be chosen arbitrary small, \eqref{sect2-eq5} when $d=1$ follows from the integral representation formula \eqref{sect2-aux-eq1}, once combined with residue formula \eqref{sect2-aux-eq2} and upper error estimates \eqref{sect2-aux-eq3}. \\
Assume now that Theorem \ref{sect2-thm2} is proved up to the step $m-1$ ($m\geq 2$).
Let $\bfeta
\in \prod_{j=1}^m ]0,r_j[$. One has from the hypothesis $(H_m)$ that for any ${\rm K} \subset \subset {\bf R}$
$$
M_{\bfeta} ({\rm K}) := \sup \,
\big\{|f_x (\bfw)|\,:\, \bfw\in \C^m\ \text{with}\  |w_j| = r_j + 2 - \eta_j/2\ \text{for}\ j=1,...,m,\ x \in {\rm K}\big\}
< +\infty.
$$
Let $J\subset \{1,...,m\}$ be a proper non-empty subset of $\{1,...,m\}$ and denote, for
$\bfN \in (\N^*)^m$, $\bfa \in \prod_{j=1}^m ]-r_j,r_j[$, $\bfw \in \mathbb D^m_{\bfr + {\bf2}}$,
$\bfN_J := (N_j)_{j\in J}$, $\bfa_J:= (a_j)_{j\in J}$, $\bfw_{J'}= (w_j)_{j\not\in J}$ (with increasing order on indexes). Let also $\Gamma_{\bfr_{J'} + {\bf 2} - \bfeta_{J'}/2}$ be the Shilov boundary of the polydisc
$$
\mathbb D^{m - \#J}_{\bfr_{J'} + 2 - \bfeta_{J'}/2} : = \{\bfw_{J'} \,:\, |w_j| = r_j + 2 - \eta/2\ \text{for}\ j\not\in J\} \subset \C^{m-\#J}
$$
and $|\Gamma_{\bfr_{J'} + {\bf 2} - \bfeta_{J'}/2}|$ be its support.
If
\begin{equation}\label{sect2-aux-eq4}
\Phi_{J,\bfN}\, (\bfa_J,\bfw_{J'},x) :=
\sum\limits_{\bfnu \prec \bfN_J} \prod_{j\in J} \prod_{\stackrel{\nu =0}{\nu \not=\nu_j}}^{N_j}
\frac{a_j - h_j(N_j,\nu)}{h_j(N_j,\nu_j) - h_j(N_j,\nu)}\, f_x\,  ({\bfh} (\bfN_J,\bfnu), \bfw_{J'})
\end{equation}
when $|a_j|< r_j$ for $j\in J$, $|w_j|< r_j + 2$ for $j\not\in J$ and $x\in {\bf R}$, it follows from the inductive assertion $(A_{m-\#J})$
that
\begin{multline}\label{sect2-aux-eq5}
\sup \big\{ |\Phi_{J,\bfN}\, (\bfa_J,\bfw_{J'},x)|\,:\,
\bfa_J\in \prod_{j\in J}
[-(r_j-\eta_j),r_j-\eta_j],\  \bfw_{J'}\in |\Gamma_{\bfr_{J'} + {\bf 2} - \bfeta_{J'}/2}|,\
x \in {\rm K}\big\} \\
:= M_{J,\bfeta} ({\rm K}) < +\infty.
\end{multline}
We complete now the proof of Theorem \ref{sect2-thm2} when $m\geq 2$ (assuming $(A_\mu)$ for $\mu<m$).
For any $\bfa \in \prod_{j=1}^m [-(r_j-\eta_j),r_j-\eta_j]$ and $x \in {\rm K}$, Cauchy formula implies in the multivariate (tensorial) setting that
\begin{equation}\label{sect2-aux-eq6}
\phi\, (\bfa, x) = \frac{1}{(2i\pi)^m}
\int_{\Gamma_{\bfr + {\bf 2}-\bfeta/2}}
f_x (\bfw) \frac{\prod_{j=1}^m \big(Q_{N_j}(w_j) - Q_{N_j} (a_j) + Q_{N_j} (a_j)\big)}{(w_1-a_1)\cdots (w_m-a_m)} \frac{dw_1 \wedge \cdots \wedge dw_m}{Q_{\bfN}(\bfw)},
\end{equation}
where
$$
Q_{N_j} (w_j) := \prod\limits_{\nu=0}^{N_j} (w_j - h_j(N_j,\nu))\ ,\
Q_{\bfN} (\bfw) := \prod\limits_{j=1}^m Q_{N_j} (w_j),
$$
and $\Gamma_{\bfr  + {\bf 2}- \bfeta/2}$ denotes the Shilov boundary of the polydisc $\mathbb D^m_{\bfr + {\bf 2}- \bfeta/2}$. Residue formula in such setting leads to
\begin{multline}\label{sect2-aux-eq7}
\frac{1}{(2i\pi)^m}
\int_{\Gamma_{\bfr + {\bf 2} - \bfeta/2}}
f_x (\bfw) \, \Big(\prod_{j=1}^m
\frac{Q_{N_j} (w_j) - Q_{N_j} (a_j)}{w_j-a_j}\Big)\, \frac{dw_1 \wedge \cdots \wedge dw_m}{Q_{\bfN}(\bfw)} \\
= \sum\limits_{\bfnu \prec \bfN} \prod_{j=1}^m \prod_{\stackrel{\nu=0}{\nu\not=\nu_j}}^{N_j}
\frac{a_j - h_j(N_j,\nu)}{h_j(N_j,\nu_j) - h_j(N_j,\nu)}\, \phi\,  ({\bfh} (\bfN,\bfnu),x).
\end{multline}
Given any proper non-empty subset $J$ of $\{1,...,m\}$, observe that it also implies that
\begin{multline}\label{sect2-aux-eq8}
\int_{\Gamma_{\bfr + {\bf 2} - \bfeta/2}}
f_x (\bfw) \Big( \prod_{j\in J}
\frac{Q_{N_j} (w_j) - Q_{N_j} (a_j)}{w_j -a_j}\, \frac{1}{Q_{N_j} (w_j)}\Big)\,
\prod_{j\not\in J}
\frac{Q_{N_j} (a_j)}{Q_{N_j} (w_j)}\, \frac{\bigwedge_{j=1}^m dw_j}{\prod_{j\not\in J}
(w_j-a_j)}
\\
= (2i\pi)^{\#J} \int_{\Gamma_{\bfr_{J'} + {\bf 2} - \bfeta_{J'}/2}} \Phi_{J,\bfN}\, (\bfa_J,\bfw_{J'}, x)
\, \prod_{j\not \in J}
\frac{Q_{N_j} (a_j)}{Q_{N_j} (w_j)}\, \bigwedge_{j\not\in J} \frac{dw_j}{w_j-a_j}.
\end{multline}
On the other hand, for any $\bfw$ on the support of $\Gamma_{\bfr + {\bf 2}-\bfeta/2}$ and any $j=1,...,m$
\begin{equation}\label{sect2-aux-eq9}
\Big|
\frac{Q_{N_j} (a_j)}{Q_{N_j} (w_j)}\Big|
= \prod_{\nu =0}^{N_j} \Big|\frac{a_j - h_j (N_j,\nu)}{w_j - h_j(N_j,\nu)}\Big| \leq
\Big(\frac{1 + r_j -\eta_j}{1+r_j-\eta_j/2}\Big)^{N_j+1}.
\end{equation}
It follows from \eqref{sect2-aux-eq8}, combined with \eqref{sect2-aux-eq5} and
\eqref{sect2-aux-eq9}, that for any non-empty proper subset $J$ of $\{1,...,m\}$
\begin{multline}\label{sect2-aux-eq10}
\Big|\int_{\Gamma_{\bfr + {\bf 2} - \bfeta/2}}
f_x (\bfw) \Big( \prod_{j\in J}
\frac{Q_{N_j} (w_j) - Q_{N_j} (a_j)}{w_j -a_j}\, \frac{1}{Q_{N_j} (w_j)}\Big)\,
\prod_{j\not\in J}
\frac{Q_{N_j} (a_j)}{Q_{N_j} (w_j)}\, \frac{\bigwedge_{j=1}^d dw_j}{\prod_{j\not\in J}
(w_j-a_j)}\Big| \\
\leq M_{J,\bfeta} ({\rm K}) \,
\prod_{j\not\in J} \Big(\frac{r_j + 2 -\eta_j}{2 + \eta_j/2}\Big)\,
\Big(\frac{1 + r_j -\eta_j}{1+r_j-\eta_j/2}\Big)^{N_j+1} = O_{J,\bfeta,{\rm K}} \big(e^{-\varepsilon_{J',\bfeta}
\min_{j\not\in J} N_j}\big)
\end{multline}
for some $\varepsilon_{J',\bfeta}>0$. Since one has also (when $J=\emptyset$)
\begin{multline}\label{sect2-aux-eq11}
\Big| \int_{\Gamma_{\bfr + {\bf 2} - \bfeta/2}}
f_x (\bfw)\,
\frac{Q_{\bfN} (\bfa)}{Q_{\bfN} (\bfw)}\,
\bigwedge_{j=1}^m \frac{dw_j}{w_j-a_j} \Big| \\
\leq M_{\bfeta} ({\rm K}) \, \prod_{j=1}^m \Big(\frac{r_j + 2 -\eta_j}{2 + \eta_j/2}\Big)\,
\Big(\frac{1 + r_j -\eta_j}{1+r_j-\eta_j/2}\Big)^{N_j+1}   =
 O_{\bfeta,{\rm K}} \big(e^{-\varepsilon_{\bfeta}\,
\sum_j N_j}\big)
\end{multline}
for some $\varepsilon_{\bfeta}> 0$, \eqref{sect2-eq5} follows from the integral representation formula \eqref{sect2-aux-eq6}, together with residue formula \eqref{sect2-aux-eq7} and upper estimates \eqref{sect2-aux-eq10} and
\eqref{sect2-aux-eq11}.
\end{proof}

\begin{example}\label{sect2-expl2}
{\rm Let $m=1$, $G_1,...,G_d$ be univariate holomorphic functions respectively in the discs $\mathbb D\,(0,R_1)$,...,$\mathbb D\,(0,R_d)$, $V_{d,\boldsymbol R} := \{(a,x)\in \R^{d+1}\,:\, (|a|+2)\, |x_\ell| < R_\ell \ \text{for}\ \ell =1,...,d\}$.
Theorem \ref{sect2-thm2} (when $m=1$) implies that the sequence of functions
\begin{equation*}
\Big( (a,x) \in V_{d,\boldsymbol R} \longmapsto
\sum\limits_{\nu=0}^N
\Big(\prod_{\stackrel{\nu'=0}{\nu\,'\not=\nu}}^N \frac{a - h(N,\nu\,')}{h(N,\nu) - h(N,\nu\,')}\Big)\,
\prod_{\ell =1}^d G_\ell\, (h(N,\nu)\, x_\ell) \Big)_{N\geq 1}
\end{equation*}
converges uniformly on any compact subset of $V_{d,\boldsymbol R}$ to
$(a,x) \longmapsto \prod_{\ell=1}^d G_\ell (a\, x_\ell)$. This corresponds to \cite[Example 4.9 (2)]{CPSS23}. More generally, let $q_1,...,q_d \in \N^*$ and
$$
V_{d,\boldsymbol q,\boldsymbol R} := \big\{(a,x)\in \R^{d+1}\,:\, (|a| +2)^{q_\ell}\, |x_\ell|< R_\ell\ \text{for}\ \ell =1,...,d\big\}.
$$
Theorem \ref{sect2-thm2} (still when $m=1$) implies that the sequence of continuous functions
$$
\Big( (a,x) \in V_{d,\boldsymbol q,\boldsymbol R} \longmapsto
\sum\limits_{\nu=0}^N
\Big(\prod_{\stackrel{\nu'=0}{\nu\,'\not=\nu}}^N \frac{a - h(N,\nu\,')}{h(N,\nu) - h(N,\nu\,')}\Big)\,
\prod_{\ell =1}^d G_\ell\, (h^{q_\ell}(N,\nu)\, x_\ell) \Big)_{N\geq 1}
$$
converges uniformly on any compact subset of $V_{d,\boldsymbol q,\boldsymbol R}$ to
$(a,x) \longmapsto \prod_{\ell=1}^d G_\ell (a^{q_\ell}\, x_\ell)$.
}
\end{example}

\section{Iterated Bernstein approximation and autosimilarity}\label{sect3}

For any $T \in [0,1[$, let
\begin{equation}\label{sect3-eq1}
\Gamma_T := \big\{w\in \C\,:\, |1 - w|^{1+T} |1+w|^{1-T} = (1+T)^{1+T} (1-T)^{1-T}\big\}.
\end{equation}

\begin{lemma}\label{sect3-lem1}
For any $T \in [0,1[$, $\Gamma_T$ is a lemniscate with double point $-T\in ]-1,0]$, such that the open subset
$$
\Omega_T = \Big\{w \in \C\,:\, \Big(\frac{|1-w|}{1+T}\Big)^{1+T}\, \Big(\frac{|1+w|}{1-T}\Big)^{1-T} < 1\Big\}
$$
is the union of two disjoint bounded domains $\Omega_T^-$ and $\Omega_T^+$ symmetric with respect to the real axis, which contain respectively
$-1$ and $1$. Moreover $\Omega_T^- \, \cap\, \R = ]-1-\rho\, (T), -T[$, where $\rho\, (T)$ is the unique root of
the strictly increasing function
$$
\rho \in ]0,+\infty[ \longmapsto \Big(\frac{\rho}{1-T}\Big)^{1-T}\,
\Big(\frac{\rho +2}{1+T}\Big)^{1+T} -1 \in ]-1,+\infty[
$$
and $T \in [0,1[ \longmapsto r(T) := 1 +\rho\, (T)$ is a strictly decreasing function from $\sqrt 2$ to $1_+$. Moreover $\Omega_T^+ \, \cap\, \R = ]-T, \check r\, (T)[$, where $T\in [0,1[ \longmapsto \check r\, (T)$ is a strictly increasing function from $\sqrt 2$ to $3_-$.
\end{lemma}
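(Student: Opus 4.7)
My plan is to reduce everything to analyzing the harmonic function
$$u_T(w) := (1+T)\log|1-w|+(1-T)\log|1+w|,$$
which is harmonic on $\C\setminus\{-1,1\}$, tends to $-\infty$ at $\pm 1$, and to $+\infty$ at infinity. With this choice, $\Gamma_T=\{u_T=u_T(-T)\}$ and $\Omega_T=\{u_T<u_T(-T)\}$. On any simply connected subdomain of $\C\setminus\{-1,1\}$, $u_T=\mathrm{Re}\,g_T$ with $g_T(w)=(1+T)\log(1-w)+(1-T)\log(1+w)$, and a direct computation gives
$$g_T'(w)=-\frac{2(w+T)}{1-w^2},\qquad g_T''(-T)=-\frac{2}{1-T^2}\neq 0,$$
so $-T\in(-1,0]$ is the only critical point of $u_T$ in $\C\setminus\{-1,1\}$; it is a non-degenerate saddle, with critical value $u_T(-T)=\log\bigl((1+T)^{1+T}(1-T)^{1-T}\bigr)$, matching the right-hand side of the equation defining $\Gamma_T$.

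The second step is to establish the global topology of $\Omega_T$. The Morse lemma applied at $-T$, using the expansion $g_T(w)-g_T(-T)=-(w+T)^2/(1-T^2)+O((w+T)^3)$, shows that $\Gamma_T$ has a transverse self-crossing at $-T$ and, by the implicit function theorem, is a smooth real-analytic curve elsewhere. The symmetry $u_T(\bar w)=u_T(w)$ makes $\Gamma_T$ and $\Omega_T$ invariant under conjugation. Since $u_T\to -\infty$ at $\pm 1$ and $u_T\to+\infty$ at infinity, $\Omega_T$ is bounded, and a Morse-theoretic count on the sphere (one saddle at $-T$, two logarithmic minima at $\pm 1$, one maximum at $\infty$, consistent with $\chi(S^2)=2$) forces $\Omega_T$ to split at $-T$ into exactly two bounded components $\Omega_T^\pm$ with $\pm 1\in\Omega_T^\pm$, each symmetric about $\R$; the unbounded component of the complement of $\Gamma_T$ is then $\{u_T>u_T(-T)\}$.

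For the intersections with $\R$, the derivative $u_T'(x)=-2(x+T)/(1-x^2)$ shows that $u_T|_{\R}$ is strictly decreasing on $(-\infty,-1)$ from $+\infty$ to $-\infty$, strictly increasing on $(-1,-T)$ up to $u_T(-T)$, strictly decreasing on $(-T,1)$, and strictly increasing on $(1,+\infty)$ from $-\infty$ to $+\infty$. Intersecting with $\{u_T<u_T(-T)\}$ gives $\Omega_T^-\cap\R=(x_-,-T)$ and $\Omega_T^+\cap\R=(-T,x_+)$. Writing $x_-=-1-\rho(T)$ and $\check r(T)=x_+$, the equations $u_T(x_\pm)=u_T(-T)$ become
$$\Bigl(\tfrac{\rho}{1-T}\Bigr)^{1-T}\Bigl(\tfrac{\rho+2}{1+T}\Bigr)^{1+T}=1,\qquad \Bigl(\tfrac{\check r-1}{1+T}\Bigr)^{1+T}\Bigl(\tfrac{\check r+1}{1-T}\Bigr)^{1-T}=1.$$
The first is a strictly increasing function of $\rho>0$, with logarithmic derivative $(1-T)/\rho+(1+T)/(\rho+2)>0$, going from $-1$ at $\rho=0^+$ to $+\infty$, so $\rho(T)$ is uniquely defined as in the lemma.

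Finally, the monotonicity and endpoint values follow from the implicit function theorem applied to $F(T,\rho):=(1-T)\log(\rho/(1-T))+(1+T)\log((\rho+2)/(1+T))$. Since $\partial_\rho F>0$ and $\partial_T F=\log\bigl((\rho+2)(1-T)/(\rho(1+T))\bigr)$, the sign of $\rho'(T)$ equals that of $\rho(T)-(1-T)/T$. A direct computation yields $F(T,(1-T)/T)=-2\log T>0$ for $T\in(0,1)$, forcing $\rho(T)<(1-T)/T$, and hence $r(T)=1+\rho(T)$ is strictly decreasing. The parallel calculation for $\check r$ gives $\partial_T\tilde F<0$ identically, from the elementary inequality $(\check r-1)(1-T)<(\check r+1)(1+T)$, so $\check r$ is strictly increasing. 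Setting $T=0$ reduces each equation to the quadratic $x^2=2$, hence $r(0)=\check r(0)=\sqrt 2$; letting $T\to 1^-$ and using $(1-T)\log(\rho(T)/(1-T))\to 0$ (together with $\rho(T)\to 0$) gives $r(T)\to 1^+$ and $\check r(T)\to 3^-$. The main obstacle I anticipate is the global topological step, since the exact count of connected components of $\Omega_T$ is not automatic from local information and relies on a careful Morse-theoretic count together with the boundary behavior of $u_T$ at $\pm 1$ and infinity.
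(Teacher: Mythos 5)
Your proof is correct, and for the topological part it takes a genuinely different route from the paper's. Where the paper changes variables to $w = -T+(1+T)w'$ and expands $(1-2\gamma\cos\theta+\gamma^2)^{1+T}(1+2c\gamma\cos\theta+c^2\gamma^2)^{1-T}-1$ in polar coordinates around $-T$ to exhibit the two tangent lines of the node (with directions $|\cos\theta|=1/\sqrt 2$), you instead work directly with the harmonic function $u_T=\operatorname{Re}g_T$, identify the unique finite critical point from $g_T'(w)=-2(w+T)/(1-w^2)$, and read off the topology of $\Omega_T$ from a sublevel-set count. The payoff of your method is that the conclusion ``$\Omega_T$ has exactly two components containing $\pm 1$'' comes out rigorously, whereas the paper's step from ``$-T$ is a transverse double point'' to ``$\Gamma_T$ is a lemniscate'' is left somewhat implicit; the price is that you must invoke some global Morse-type reasoning. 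You correctly flag this as the delicate step; for the record, the cleanest patch is the minimum principle: a bounded component $U$ of $\{u_T<v_0\}$ on which $u_T$ is harmonic with boundary value $v_0$ would force $u_T\geq v_0$ on $U$, so every such component must contain one of the two logarithmic poles $\pm 1$, giving at most two components, and these are separate for $v<v_0$ because there is no intermediate critical value. The paper's approach, on the other hand, gives slightly more explicit geometric information (the tangent cone at the node) at the cost of leaving the global step informal.

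On the real-axis analysis (the monotonicity of $u_T|_\R$ on the four intervals, the defining equations for $\rho(T)$ and $\check r(T)$, the implicit-function computation of $\rho'(T)$ via the sign of $\partial_TF$, the check $F(T,(1-T)/T)=-2\log T>0$ forcing $\rho(T)<(1-T)/T$, the sign of the analogous expression for $\check r$, and the endpoint values at $T=0$ and $T\to1^-$) you are doing essentially what the paper does, just phrased via $F$ and the implicit function theorem rather than via a direct computation of the partial derivative. Both yield the same inequalities and the same limiting values $\sqrt 2$, $1_+$, $3_-$. One minor informality on your side: passing to the limit $T\to 1^-$ with ``$(1-T)\log(\rho(T)/(1-T))\to 0$ together with $\rho(T)\to 0$'' has a mild circularity; the paper's argument (evaluate the defining function at a fixed $\rho>0$ and $T\to 1^-$, get $(1+\rho/2)^2-1>0$, hence $\rho(T)<\rho$ eventually) is tighter and you should adopt it, but this is easily repaired and does not affect the correctness of the overall proof.
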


\begin{proof}
One has $-T \in \Gamma_T$. Let $w=-T + (1+T)\, w'= -T + (1+T)\, \gamma\, e^{i\theta}$. Then, if $c := (1+T)/(1-T)$,
\begin{equation*}
\begin{split}
\Gamma_T & := \Big\{w'\in \C\,:\, \big| 1 - w'\big|^{1+T} \, \big| 1 + c\, \, w'\big|^{1-T} = 1\Big\}\\
& = \Big\{(\gamma,\theta)\in \R^+ \times \R/(2\pi\Z)\,:\,
(\gamma^2 + 2 \gamma\, \cos \theta +1)^{1+T} (c^2\, \gamma^2 - 2c\, \gamma \cos \theta +
1)^{1-T} = 1\Big\}.
\end{split}
\end{equation*}
Since
\begin{equation*}
(1 - 2\cos \theta\, \gamma  +\gamma^2)^{1+T} (1 + 2c\cos \theta\, \gamma + c^2\, \gamma^2)^{1-T} -1 \\ =
2 \, c\, (1 - 2\, \cos^2\theta\big) \gamma^2 + O_\theta (\gamma^3),
\end{equation*}
the point $\{-T\}$ is a double point of $\Gamma_T$ and any ray $\{-T + \gamma\, e^{i\theta}\,:\, \gamma>0\}$ intersects $\Gamma_T$ transversally in exactly two points $\{\xi_{\theta}\}$ and $\{\check \xi_\theta\}$ respectively in $\{{\rm Re}\, w <-T\}$ and
$\{{\rm Re}\, w > -T\}$ provided that $1-2 \cos^2\, \theta < 0$, that is
$|\cos \theta| > 1/\sqrt 2 = \cos\, (\pi/4)$; otherwise it does not intersect $\Gamma_T$.
It follows that $\Gamma_T$ is a lemniscate with node $\{-T\}$ which is symmetric with respect to the real axis. Let us now consider the particular cases where $\theta=\pi$ and $\theta=0$. Since
$$
w \in [-1,-T[ \Longleftrightarrow  \frac{1-T}{1+T}\leq w' \leq 0,
$$
$\Gamma_T \,\cap\, [-1,-T[ = \emptyset$. A symmetric argument shows that $\Gamma_T \, \cap\, ]-T,1] = \emptyset$. If one lets $w =-1 -\rho$ with $\rho>0$ in \eqref{sect3-eq1}, one can see that $\Gamma_T \, \cap\, ]-\infty,-1[ =\{-1 - \rho\,(T)\} =  \{-r\,(T)\}$. One has that, for $T=0$, $\rho\, (0)\, (\rho\, (0)+2) = 1$, namely
$(\rho\, (0) +1)^2 = 2$, that is $r\, (0) = \sqrt 2$. Since for any $\rho>0$
$$
\lim\limits_{T \rightarrow 1_{-}}
\Big(\Big(\frac{\rho +2}{1+T}\Big)^{1+T} \exp \big ((1-T)(\log \rho - \log (1-T))\big) -1\Big)  =
\Big(1 + \frac{\rho}{2}\Big)^2 -1 > 0,
$$
one has $\lim_{T\rightarrow 1_{-}} (\rho\, (T)) =0_+$, hence $\lim_{T\rightarrow 1_{-}} (r\, (T)) = 1_{-}$.
Let $\rho>0$ and $T \in ]0,1[$. Then
$$
\frac{\partial}{\partial T}
\Big( (1-T) \, \log \frac{\rho}{1-T} + (1+T)\, \log \frac{\rho+2}{1+T}\Big)  = \log
\Big(\frac{\rho+2}{\rho} \, \frac{1-T}{1+T} \Big) \leq 0 \Longleftrightarrow
\rho \geq \frac{1}{T}-1.
$$
Since
$$
\Big( \frac{1/T -1 }{1 - T}\Big)^{1-T} \, \Big(\frac{(1/T-1)+2}{1 + T}\Big)^{1+T} -1 =
\frac{1}{T^2} - 1 >0 \Longrightarrow \rho\, (T) < \frac{1}{T} -1,
$$
one has
$$
\Big(\frac{\partial}{\partial T}
\Big( (1-T) \, \log \frac{\rho}{1-T} + (1+T)\, \log \frac{\rho+2}{1+T}\Big)\Big)_{\rho = \rho\, (T)} > 0.
$$
This implies that the function $T\in [0,1[ \rightarrow r\, (T)$ is strictly decreasing from $\sqrt 2$ to $1_+$.
Similarly, letting $w = 1+\rho$ with $\rho>0$ in \eqref{sect3-eq1} leads to $\Gamma_T \, \cap\, ]1,+\infty[ = \{1 + \check \rho\, (T)\}=\{\check r\, (T)\}$, where $\check \rho\, (T)$ is the unique root of the strictly increasing function
$$
\rho \in ]0,+\infty[ \longmapsto \Big(\frac{\rho}{1+T}\Big)^{1+T}\,
\Big(\frac{\rho +2}{1-T}\Big)^{1-T} - 1 \in ]-1,+\infty[.
$$
Since one has for any $\rho>2$ that
$$
\lim\limits_{T \rightarrow 1_{-}}
\Big(\Big(\frac{\rho}{1+T}\Big)^{1+T} \exp \big ((1-T)(\log (\rho +2) - \log (1-T))\big) -1\Big)  =
\Big(\frac{\rho}{2}\Big)^2 - 1 >0,
$$
a similar argument as that used to describe the behavior of $T\in [0,1[ \mapsto r(T)$ shows that
function $T\in [0,1[ \longmapsto \check r\, (T)$ is strictly increasing from $\sqrt{2}$ to
$3_{-}$. This completes the proof of Lemma \ref{sect3-lem1}.
\end{proof}

Let $m\geq 1$. For each non-empty ordered subset $J: 1 \leq j_1 < \cdots < j_\mu\leq m$ of $\{1,...,m\}$, we denote from now on by $\pi_J$ the projection $\bfw \in \C^m \longmapsto \bfw_J = (w_{j_1},...,w_{j_\mu}) \in \C^\mu$.

\begin{lemma}\label{sect3-lem2} Let $m\geq 1$,
$({\bf R},{\bf d})$ be a metric space and $\phi : (\bfa,x) \longmapsto \phi\, (\bfa, x)\in \C$ be a continuous function on $\big(\prod_{j=1}^m ]-r(T_j),r(T_j)[\big) \times {\bf R}$, with $T_j\in [0,1[$ and
$r(T_j) \in ]1,\sqrt 2]$ defined as in Lemma {\rm \ref{sect3-lem1}} for $j=1,...,m$. Let
$\bfr(\bfT) = (r(T_1),...,r(T_m))$. Suppose that for each pair of disjoint
subsets $\bfJ = (J^-,J^+)$ of $\{1,...,m\}$ with
$J= J^- \cup J^+ \not=\emptyset$ and $J':= \{1,...,m\}
\setminus J$, there is a continuous map
$$
f_{\bfJ}: (\bfalpha,x) \in \pi_{J'} ([-1,1]^m) \times {\bf R} \longmapsto f_{\bfJ,\bfalpha,x} \in H\big(\pi_J (\mathbb D^m_{\bfr(\bfT)})\big)
$$
such that for $x\in {\bf R}$ and
$\bfa \in \prod_{j=1}^m ]-r(T_j),r(T_j)[$
\begin{equation}\label{sect3-eq2}
\begin{cases} & a_j \leq -T_j\ \text{for}\ j\in J^- \\
& a_j \geq T_j\ \text{for}\ j \in J^+ \\
& a_j \in [-1,1]\ \text{for}\ j\in J'
\end{cases}
\Longrightarrow \phi\, (\bfa,x) = f_{\bfJ,\bfa_{J'},x} (\bfa_J).
\end{equation}
Then
\begin{equation}\label{sect3-eq3}
\phi\, (\bfa,x) = \lim\limits_{\{\bfN\in (\N^*)^m\,:\, \min \bfN \rightarrow +\infty\}}
\sum_{\bfnu \prec \bfN} \binom{\bfN}{\bfnu}
\, \Big(\frac{1-\bfa}{2}\Big)^{\bfnu} \, \Big(\frac{1+\bfa}{2}\Big)^{\bfN - \bfnu}\, \phi\, \Big({\bf 1} - 2\, \frac{\bfnu}{\bfN},x\Big)
\end{equation}
uniformly in $(\bfa,x)$ on any compact subset of $\big(\prod_{j=1}^m ]-r(T_j),r(T_j)[\big)\times
{\bf R}$.
\end{lemma}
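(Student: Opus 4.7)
The plan is a two-pronged approach: first I will handle the univariate case $m=1$ using the lemniscate structure $\Gamma_T$ established in Lemma \ref{sect3-lem1}; then I will extend to general $m$ by induction, using an iterated form of the Bernstein sum together with a multivariate version of the lemniscate estimate to control the joint limit.

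For the univariate base case $m=1$, the hypothesis provides two holomorphic extensions $f_-,f_+$ of $\phi(\cdot,x)$ to the disc $\{|w|<r(T)\}$, agreeing with $\phi$ on $]-r(T),-T]$ and $[T,r(T)[$ respectively but not necessarily compatible on the middle interval $[-T,T]$. The key decomposition is
\begin{equation*}
B_N\phi(a,x)=B_Nf_-(a,x)+B_N(\phi-f_-)(a,x)
\end{equation*}
(and the symmetric one with $f_+$). By Bernstein's 1935 theorem (the univariate, parameter-free version of Theorem \ref{sect2-thm1}), $B_Nf_-\to f_-$ uniformly on compact subsets of $]-r(T),r(T)[$, which already gives the correct limit $\phi=f_-$ on $]-r(T),-T]$. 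The correction $B_N(\phi-f_-)(a,x)$ involves only sample points $1-2\nu/N\in (-T,1]$. A saddle-point/Stirling analysis of $\binom{N}{\nu}\big(\tfrac{1-a}{2}\big)^\nu\big(\tfrac{1+a}{2}\big)^{N-\nu}$ shows that when $a\in\,]-r(T),-T[\,$ lies outside $[-1,1]$ the $\nu$-saddle falls at $t=1/a<-T$, so the tail sum $\sum_{t_\nu>-T}|w_\nu(a,N)|$ is dominated by the boundary weight at $t=-T$, itself of order $O(N^{-1/2}[G_T(a)]^{N/2})$ with $G_T(w):=\big|\tfrac{1-w}{1+T}\big|^{1+T}\big|\tfrac{1+w}{1-T}\big|^{1-T}$. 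By Lemma \ref{sect3-lem1}, $G_T(a)<1$ precisely on $\Omega_T^-\cup\Omega_T^+$, so $B_N(\phi-f_-)\to 0$ exponentially on compact subsets of $]-r(T),-T[$. The symmetric argument with $f_+$ handles $]T,r(T)[$, and on $[-T,T]\subset[-1,1]$ the standard Bernstein--Bochner convergence \eqref{sect1-eq2bis} applies.

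For the inductive step from $m-1$ to $m$, I would separate the last coordinate:
\begin{equation*}
S_{\bfN}(\bfa,x)=\sum_{\nu=0}^{N_m}\binom{N_m}{\nu}\Big(\tfrac{1-a_m}{2}\Big)^{\nu}\Big(\tfrac{1+a_m}{2}\Big)^{N_m-\nu}\,U_{\bfN'}\Big(\bfa',\,1-\tfrac{2\nu}{N_m},\,x\Big),
\end{equation*}
where $\bfN'=(N_1,\dots,N_{m-1})$, $\bfa'=(a_1,\dots,a_{m-1})$, and $U_{\bfN'}$ is the $(m-1)$-variable Bernstein sum in $(a_1,\dots,a_{m-1})$ with $t:=1-2\nu/N_m\in[-1,1]$ viewed as an extra parameter in ${\bf R}\times[-1,1]$. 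The function $(\bfa',(x,t))\mapsto\phi(\bfa',t,x)$ inherits the lemma's hypothesis at level $m-1$ by restricting \eqref{sect3-eq2} to configurations with $m\notin J$ and $a_m=t\in[-1,1]$. The inductive hypothesis then yields $U_{\bfN'}\to\phi(\bfa',t,x)$ uniformly on compact subsets; splitting $S_{\bfN}-\phi(\bfa,x)$ into an inner error $\sum_\nu w_\nu(a_m,N_m)(U_{\bfN'}-\phi)(\bfa',t,x)$ and an outer error (the univariate Bernstein approximation in $a_m$ applied to $a_m\mapsto\phi(\bfa',a_m,x)$, which itself satisfies the $m=1$ hypothesis via $f_{(\{m\},\emptyset),\bfa',x}$ and $f_{(\emptyset,\{m\}),\bfa',x}$), the outer error is controlled by the base case, and the inner error by the same lemniscate analysis in the $a_m$ variable, taking advantage of the fact that the sample points $t$ all lie in $[-1,1]$.

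The main obstacle is twofold. First, in the base case, the lemniscate-based exponential decay degenerates as $a$ approaches the transition points $\pm T$ (where $G_T$ equals $1$, the lemniscate being tangent to the real axis through the double point $-T$); gluing together the two regimes ``$a\in\Omega_T^\pm\setminus[-T,T]$'' and ``$a\in[-T,T]$'' into a single uniform-in-compact estimate requires the compatibility of $f_\pm$ with $\phi$ on $[-1,-T]\cup[T,1]$ built into hypothesis \eqref{sect3-eq2}. Second, in the inductive step, the naive absolute-sum bound $\sum_\nu|w_\nu(a_m,N_m)|\leq\max(1,|a_m|)^{N_m}$ grows exponentially for $|a_m|>1$, so the joint limit $\min\bfN\to+\infty$ cannot be handled by pointwise convergence of $U_{\bfN'}$ alone; one must refine the estimate using the lemniscate concentration of the weights together with the quantitative (exponential on compact subsets) rate of convergence of $U_{\bfN'}$ delivered by the induction.
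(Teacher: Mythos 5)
Your base case $m=1$ is essentially the paper's argument: split $B_N\phi=B_Nf_-+B_N(\phi-f_-)$, appeal to Bernstein's 1935 theorem for the first term and to a Kantorovitch-type weight-localization estimate for the second (the paper simply cites Kantorovitch \cite{Kanto31}, or \cite[Theorem 4.1.3]{Lor53}, instead of re-deriving it by a saddle-point computation; both deliver the needed fact that $\sum_{\{\nu: 1-2\nu/N>-T\}}|w_\nu(a,N)|\to 0$ uniformly for $a$ in a compact subset of $\Omega_T^-$, with the boundedness of $\phi-f_-$ on the sample set $[-1,1]$ finishing the estimate). So far so good.

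The genuine gap is in your inductive step, and you half-diagnose it yourself. Your decomposition isolates the \emph{Bernstein-approximation error} $U_{\bfN'}-\phi$ of the $(m-1)$-variable slices, giving the inner error $\sum_{\nu}w_\nu(a_m,N_m)\,[U_{\bfN'}-\phi](\bfa',t_\nu,x)$. For $|a_m|>1$ one has $\sum_\nu|w_\nu(a_m,N_m)|=|a_m|^{N_m}$, and the weights that are \emph{not} suppressed by Kantorovitch (those with $t_\nu\leq -T_m$, say) carry essentially all of this exponentially large mass, so ``lemniscate concentration of the weights'' does not shrink the inner error. Nor can the inductive hypothesis bail you out: Lemma \ref{sect3-lem2} gives no quantitative rate, and even if one strengthened the induction to give $\sup|U_{\bfN'}-\phi|\leq C\theta^{\min\bfN'}$ with $\theta<1$, the joint limit $\min\bfN\to+\infty$ allows $N_m\gg\min\bfN'$, so the product $|a_m|^{N_m}\theta^{\min\bfN'}$ need not tend to $0$. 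The decomposition itself is the problem.

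The paper routes around this by never forming $U_{\bfN'}-\phi$. After writing $B_{\bfN}\phi=\sum_{\bfnu'}w_{\bfnu'}(\bfa',\bfN')\sum_{\nu_m}w_{\nu_m}(a_m,N_m)\,\phi({\bf 1}-2\bfnu'/\bfN',t_{\nu_m},x)$, it replaces the \emph{sample values} $\phi(\cdot,t_{\nu_m},x)$ by the analytic-extension values $f_{\bfJ,{\bf 1}-2\bfnu'/\bfN',x}(t_{\nu_m})$ (which is an identity whenever $t_{\nu_m}\leq -T_m$ by hypothesis \eqref{sect3-eq2}), collecting the discrepancy only over the bad indices $t_{\nu_m}>-T_m$. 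That discrepancy is bounded uniformly (everything is evaluated on $[-1,1]^m$), and its total weight vanishes by Kantorovitch. The main term is a Bernstein sum of the \emph{analytic} function $f_{\bfJ}$, which converges by Theorem \ref{sect2-thm1} in the $a_m$ variable and by \eqref{sect1-eq2bis} in $\bfa'\in[-1,1]^{m-1}$; the two limits combine into the joint limit precisely because the outer weights $w_{\bfnu'}(\bfa',\bfN')$ are nonnegative and sum to $1$. Finally, the paper runs a bivariate induction on $m$ and on $\mu=\mathrm{card}\,J$ (re-indexing so the peeled coordinate lies in $J$), which is what makes the uniform boundedness $\sup_{\bfalpha'\in\pi_{\{1,\dots,m-1\}}(K_{\bfJ}),\alpha_m\in[-1,1]}|B_{\phi(\cdot,\alpha_m,x)}(\bfalpha')-f_{\bfJ,\bfalpha',x}(\alpha_m)|<+\infty$ available at step $(m,\mu)$ from step $(m-1,\mu-1)$. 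Your single induction on $m$ alone cannot supply that. To repair your proof you would need to replace the $U_{\bfN'}-\phi$ decomposition by the paper's $\phi\leadsto f_{\bfJ}$ substitution and to add the induction on $\mu$.
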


\begin{proof}
It follows from \eqref{sect1-eq2bis} that \eqref{sect3-eq3} holds uniformly on $[-1,1]^m \times {\rm K}$, where ${\rm K}$ is any compact subset of ${\bf R}$. Since
\begin{equation*}
\prod_{j=1}^m \big]-r(T_j),r(T_j)\big[ =
\prod_{j=1}^m \Big(
\big]-r(T_j), -T_j\big[ \, \cup\,  \big[-1,1\big]\, \cup\, \big]T_j, r(T_j)\big[ \Big),
\end{equation*}
proving the lemma amounts to prove the following assertion ($(A_{m,m})$): given any pair of disjoint
subsets $\bfJ = (J^-,J^+)$ of $\{1,...,m\}$ with $J= J^- \cup J^+ \not=\emptyset$ and $J':= \{1,...,m\}
\setminus J$, then \eqref{sect3-eq3} holds on any compact set $K_{\bfJ} \times {\rm K}$, where ${\rm K}$ is a compact subset of ${\bf R}$ and $K_{\bfJ}$ is a compact subset of $\prod_{j=1}^m ]-r(T_j),r(T_j)[$ which is defined by
\begin{equation}\label{sect3-eq4}
\bfa \in K_{\bfJ} \Longleftrightarrow
\begin{cases} a_j \in [\gamma_j^-,\delta_j^-] \subset \Omega_{T_j}^-\ \text{when}\ j \in J^- \\
a_j \in [\gamma_j^+,\delta_j^+] \subset - \Omega_{T_j}^-\ \text{when}\ j \in J^+ \\
a_j \in [-1,1]\ \text{when}\ j \in J'
\end{cases}.
\end{equation}
In order to prove $(A_{m,m})$, we use an induction on $m$, then on the cardinal $1\leq \mu\leq m$ of $J = J^- \cup J^+$.
Let us start with $m$ arbitrary, $\mu=1$ and re-index coordinates so that $\bfJ = (\{m\},\emptyset)$ or
$(\emptyset,\{m\})$.
Given $\bfN\in (\N^*)^m$, $\bfa \in K_{\bfJ}$ and $x\in {\rm K}$, observe that
\begin{multline}\label{sect3-eq10}
\sum_{\bfnu \prec \bfN} \binom{\bfN}{\bfnu}
\, \Big(\frac{1-\bfa}{2}\Big)^{\bfnu} \, \Big(\frac{1+\bfa}{2}\Big)^{\bfN - \bfnu}\, \phi\, \Big({\bf 1} - 2\, \frac{\bfnu}{\bfN},x\Big) \\
= \sum\limits_{\nu_m=0}^{N_m}
\binom{N_m}{\nu_m} \Big(\frac{1-a_m}{2}\Big)^{\nu_m} \, \Big(\frac{1+a_m}{2}\Big)^{N_m - \nu_m}
\mathbb B_{\phi\, (\cdot,1-2\frac{\nu_m}{N_m},x)} (\bfa')
\end{multline}
where
\begin{multline}\label{sect3-eq10bis}
\mathbb B_{\phi(\cdot,1-2\frac{\nu_m}{N_m},x)} (\bfa')\\
= \sum_{\bfnu' \prec \bfN'} \binom{\bfN'}{\bfnu'}
\, \Big(\frac{1-\bfa'}{2}\Big)^{\bfnu} \, \Big(\frac{1+\bfa'}{2}\Big)^{\bfN' - \bfnu'}\, \phi\, \Big({\bf 1} - 2\, \frac{\bfnu'}{\bfN'}\,\,
1 - 2\, \frac{\nu_m}{N_m}\,,\, x\Big)
\end{multline}
(with $\bfN'= (N_1,...,N_{m-1})$, $\bfnu'=(\nu_1,...,\nu_{m-1})$, $\bfa'= (a_1,...,a_{m-1}) = \bfa_{J'}$).
Suppose that $\bfJ = (\{m\},\emptyset)$. Then \eqref{sect3-eq10} splits as
\begin{multline}\label{sect3-eq11}
\sum\limits_{\bfnu' \prec \bfN'} \binom{\bfN'}{\bfnu'}
\, \Big(\frac{1-\bfa'}{2}\Big)^{\bfnu'} \, \Big(\frac{1+\bfa'}{2}\Big)^{\bfN' - \bfnu'}\\
\Big(
\sum\limits_{\nu_m=0}^{N_m}
\binom{N_m}{\nu_m} \Big(\frac{1-a_m}{2}\Big)^{\nu_m}
\Big(\frac{1+a_m}{2}\Big)^{N_m - \nu_m}
\, f_{\bfJ,{\bf 1} -2\bfnu'/\bfN',x} \Big(1- 2 \, \frac{\nu_m}{N_m}\Big) +  \sum\limits_{\{\nu_m \prec N_m\,:\, 1-2\frac{\nu_m}{N_m} > -T\}} \\
\binom{N_m}{\nu_m}\, \Big(\frac{1-a_m}{2}\Big)^{\nu_m} \, \Big(\frac{1+a_m}{2}\Big)^{N_m - \nu_m}\,
\Big(\mathbb B_{\phi\, (\cdot,1-2\frac{\nu_m}{N_m},x)} (\bfa') - f_{\bfJ,{\bf 1} -2\frac{\bfnu'}{\bfN'},x} \Big(1- 2 \, \frac{\nu_m}{N_m}\Big)\Big)\Big)
\end{multline}
when $\bfJ = (\{m\},\emptyset)$ or
\begin{multline}\label{sect3-eq12}
\sum\limits_{\bfnu' \prec \bfN'} \binom{\bfN'}{\bfnu'}
\, \Big(\frac{1-\bfa'}{2}\Big)^{\bfnu'} \, \Big(\frac{1+\bfa'}{2}\Big)^{\bfN' - \bfnu'}\\
\Big(
\sum\limits_{\nu_m=0}^{N_m}
\binom{N_m}{\nu_m} \Big(\frac{1-a_m}{2}\Big)^{\nu_m}
\Big(\frac{1+a_m}{2}\Big)^{N_m - \nu_m}
\, f_{\bfJ,{\bf 1} -2\bfnu'/\bfN',x} \Big(1- 2 \, \frac{\nu_m}{N_m}\Big) +  \sum\limits_{\{\nu_m \prec N_m\,:\, 1-2\frac{\nu_m}{N_m} < T\}} \\
\binom{N_m}{\nu_m}\, \Big(\frac{1-a_m}{2}\Big)^{\nu_m} \, \Big(\frac{1+a_m}{2}\Big)^{N_m - \nu_m}\,
\Big(\mathbb B_{\phi\, (\cdot,1-2\frac{\nu_m}{N_m},x)} (\bfa') - f_{\bfJ,{\bf 1} -2\frac{\bfnu'}{\bfN'},x} \Big(1- 2 \, \frac{\nu_m}{N_m}\Big)\Big)\Big)
\end{multline}
when $\bfJ = (\emptyset,\{m\})$.
One has either
\begin{equation}\label{sect3-eq13}
\lim\limits_{N_m\rightarrow +\infty}
\sup_{\bfa\in K_{\bfJ}}\, \sum\limits_{\{\nu_m \prec N_m\,:\, 1-2\frac{\nu}{N} > -T\}}
\binom{N_m}{\nu_m}\, \Big|\frac{1-a_m}{2}\Big|^{\nu_m} \, \Big|\frac{1+a_m}{2}\Big|^{N_m - \nu_m} = 0
\end{equation}
when $\bfJ = (\{m\},\emptyset)$  or
\begin{equation}\label{sect3-eq14}
\lim\limits_{N\rightarrow +\infty}
\sup_{\bfa\in K_{\bfJ}}\, \sum\limits_{\{\nu \prec N\,:\, 1-2\frac{\nu}{N} < T\}}
\binom{N}{\nu}\, \Big|\frac{1-a}{2}\Big|^{\nu} \, \Big|\frac{1+a}{2}\Big|^{N - \nu} = 0
\end{equation}
when $\bfJ = (\emptyset,\{m\})$ according to Kantorovitch localization theorem \cite{Kanto31}, see also \cite[Theorem 4.1.3]{Lor53}, provided the identification of $[-1,1]$ with $[0,1]$ is realized through the affine correspondence $a\longleftrightarrow (a+1)/2$. Since
$$
\sup_{\bfalpha \in [-1,1]^m, x\in {\rm K}}
\big|B_{\phi(\cdot,\alpha_m,x)} (\bfalpha') - f_{\bfJ,\bfalpha',x} (\alpha_m)\big|
\leq \sup_{\bfalpha \in [-1,1]^m, x\in {\rm K}} \big(|\phi\, (\bfalpha,x)| + |f_{J,\bfalpha',x}(\alpha_m)|\big) < +\infty
$$
according to \eqref{sect3-eq10bis}, proving $(A_{m,1})$ amounts to prove, in view of \eqref{sect3-eq11} or
\eqref{sect3-eq12} that
\begin{multline*}
\lim\limits_{\min_j N_j \rightarrow +\infty}
\Big(\sum\limits_{\bfnu' \prec \bfN'} \binom{\bfN'}{\bfnu'}
\, \Big(\frac{1-\bfa'}{2}\Big)^{\bfnu'} \, \Big(\frac{1+\bfa'}{2}\Big)^{\bfN' - \bfnu'}\\
\sum\limits_{\nu_m=0}^{N_m}
\binom{N_m}{\nu_m} \Big(\frac{1-a_m}{2}\Big)^{\nu_m}
\Big(\frac{1+a_m}{2}\Big)^{N_m - \nu_m}
\, f_{\bfJ,{\bf 1} -2\bfnu'/\bfN',x} \Big(1- 2 \, \frac{\nu_m}{N_m}\Big)\Big)\Big)
= \phi\, (\bfa,x)
\end{multline*}
uniformly in $\bfa\in K_{\bfJ}$, such convergence being uniform with respect to $x\in {\rm K}$. Theorem \ref{sect2-thm1} with $m=1$ implies that
$$
\lim\limits_{N_m\rightarrow +\infty} \Big(\sum\limits_{\nu_m=0}^{N_m}
\binom{N_m}{\nu_m} \Big(\frac{1-a_m}{2}\Big)^{\nu_m}
\Big(\frac{1+a_m}{2}\Big)^{N_m - \nu_m}
\, f_{\bfJ,\bfalpha,x} \Big(1- 2 \, \frac{\nu_m}{N_m}\Big)\Big) =
f_{\bfJ,\bfalpha,x}\, (a_m)
$$
uniformly in $a_m\in \pi_{\{m\}} (K_{\bfJ})$, such convergence being uniform with respect to
the parameters $(\bfalpha,x) \in [-1,1]^{m-1}\times {\rm K}$. On the other hand, one has
\begin{multline*}
\lim\limits_{\min \bfN'\rightarrow +\infty}
\Big(\sum\limits_{\bfnu' \prec \bfN'} \binom{\bfN'}{\bfnu'}
\, \Big(\frac{1-\bfa'}{2}\Big)^{\bfnu'} \, \Big(\frac{1+\bfa'}{2}\Big)^{\bfN' - \bfnu'}
f_{\bfJ, {\bf 1} - 2\, \bfnu'/\bfN',x}\, (a_m)\Big)\Big) \\
 = f_{\bfJ,\bfa',x}(a_m) = \phi\, (\bfa,x)
\end{multline*}
uniformly in $\bfa'\in [-1,1]^{m-1}$, such convergence being uniform with respect to the parameters
$(a_m,x) \in \pi_{\{m\}}(K_{\bfJ})\times {\rm K}$, according to
\eqref{sect1-eq2bis} with $m$ replaced by $m-1$. This concludes the proof of assertion $(A_{m,1})$ for any $m\in \N^*$.
In particular, it concludes the proof of Lemma \ref{sect3-lem2} when $m=1$.
We take now $m\geq 2$ and assume that Lemma \ref{sect3-lem2} is proved up to the step $m-1$.
Let $\bfJ =(J^-,J^+)$, with $J^-,J^+ \in \{1,...,m\}$ be such that ${\rm card}\, J=\mu \geq 2$.
We may re-index coordinates in order that $j_\mu=m$ and $J'= \{1,....,m-\mu\}$. We repeat the previous argument but use instead the inductive hypothesis $(A_{m-1,\mu-1})$ in order to ensure that
$$
\sup_{\bfalpha'\in \pi_{\{1,...,m-1\}}(K_\bfJ),\alpha_m \in [-1,1],
x\in {\rm K}}
\big|B_{\phi(\cdot,\alpha_m,x)} (\bfalpha') - f_{\bfJ,\bfalpha',x} (\alpha_m)\big| < +\infty.
$$
Let $\bfN_J := (N_{m-\mu=1},...,N_m)$, $\bfN_{J'} := (N_1,...,N_{m-\mu})$. Theorem \ref{sect2-thm1} with $m$ replaced by $\mu$ implies that
\begin{multline*}
\lim\limits_{\min \bfN_J\rightarrow +\infty}
\Big(\sum\limits_{\bfnu_J \prec \bfN_J}
\binom{\bfN_J}{\bfnu_J} \Big(\frac{1-\bfa_J}{2}\Big)^{\bfnu_J}
\Big(\frac{1+\bfa_J}{2}\Big)^{\bfN_J - \bfnu_J}
\, f_{\bfJ,\bfalpha,x} \Big(1- 2 \, \frac{\bfnu_J}{\bfN_J}\Big)\Big) \\
= f_{\bfJ,\bfalpha,x} (a_\bfJ)
\end{multline*}
uniformly in $\bfa_J \in \pi_J (K_{\bfJ})$, such convergence being uniform with respect to the parameters
$(\bfalpha,x) \in [-1,1]^{m-\mu} \times {\rm K}$. When $\mu=m$, we are done. In case $\mu<m$, one has also
\begin{multline*}
\lim\limits_{\min \bfN_{J'}\rightarrow +\infty}
\Big(\sum\limits_{\bfnu_{J'} \prec \bfN_{J'}} \binom{\bfN_{J'}}{\bfnu_{J'}}
\, \Big(\frac{1-\bfa_{J'}}{2}\Big)^{\bfnu_{J'}} \, \Big(\frac{1+\bfa_{J'}}{2}\Big)^{\bfN_{J'} - \bfnu_{J'}}
f_{\bfJ, {\bf 1} - 2\, \frac{\bfnu_{J'}}{\bfN_{J'}},x}\, (a_J)\Big)\Big) \\
 = f_{\bfJ,\bfa_{J'},x}(\bfa_J) = \phi\, (\bfa,x)
\end{multline*}
uniformly in $\bfa_{J'} \in [-1,1]^{m-\mu}$, such convergence being uniform with respect to the parameters
$(\bfa_J,x) \in \pi_J(K_{\bfJ}) \times {\rm K}$, according to \eqref{sect1-eq2bis} with $m$ replaced by
$m-\mu$. We are then done again. This concludes the proof of the assertion $(A_{m,\mu})$, hence of Lemma \ref{sect3-lem2}.
\end{proof}

In the global context of $\R^m$ instead of $\prod_{j=1}^m ]-r(T_j),r(T_j)[$ with $\bfT\in ([0,1[)^m$, we introduce the following concept of {\it $\bfT$-predictability}. Observe that the hypothesis in Theorem \ref{sect2-thm1} and Theorem \ref{sect2-thm2} with $r_j=+\infty$ for any $j$ correspond to {\it ${\bf 0}_-$-predictability}, namely $-1 << T_j< 0$ for any $j=1,...,m$.

\begin{definition}\label{sect3-def1}
Let $m\in \N^*$, $({\bf R},{\bf d})$ be a metric space and $\bfT \in [0,1[^m$.
A continuous complex valued function
$\phi~: (\bfa,x) \in \R^m \times {\bf R}\longmapsto \phi\, (\bfa,x)\in \C$ is said to be
{\it $\bfT$-predictable}  if and only if for each pair of disjoint
subsets $\bfJ = (J^-,J^+)$ of $\{1,...,m\}$ with
$J= J^-\, \cup\, J^+ \not=\emptyset$ and $J':= \{1,...,m\}
\setminus J$, there is a continuous map
$$
F_{\bfJ}: (\bfalpha,x) \in \pi_{J'} (\R^m) \times {\bf R} \longmapsto F_{\bfJ,\bfalpha,x} \in H\big(\pi_J (\C^m)\big)
$$
such that for $(\bfa, x)\in \R^m\times {\bf R}$
\begin{equation}\label{sect3-eq18}
\begin{cases} & a_j \leq -T_j\ \text{for}\ j\in J^- \\
& a_j \geq T_j\ \text{for}\ j \in J^+ \\
\end{cases}
\Longrightarrow \phi\, (\bfa,x) = F_{\bfJ,\bfa_{J'},x} (\bfa_J),
\end{equation}
where $\bfa_J = \pi_J (\bfa)$ and $\bfa_{J'} = \pi_{J'} (\bfa)$.
\end{definition}

\begin{example}\label{sect3-expl1} Let $x\in {\bf R} \longmapsto F_x$ be a continuous map from
${\bf R}$ to $H(\C^m)$. The function
$$
\phi~: (\bfa,x) \in \R^m \times {\bf R} \longmapsto F_x (|a_1|,...,|a_m|)
$$
is ${\bf 0}$-predictable with $F_{J,\bfa_{J'},x} (\bfw) = F_x\, (\check \bfw)$, where $\check w_j = -w_j$ for $j\in J^-$, $\check w_j= w_j$ for $j\in J^+$ and one specifies $\check w_{j'} = |a_{j'}|$ for $j\in J'$.
\end{example}

\begin{theorem}\label{sect3-thm1}
Let $\bfT \in [0,1[^m$ and $\phi : (\bfa,x) \in \R^m \times {\bf R}$ be a $\bfT$-predictable continuous complex valued function.
Then, for or any $\bfrho =(\rho_1,...,\rho_m)$ such that $1<\rho_j<r(T_j)$ for $j=1,...,m$ and for any $k\in \N$
\begin{multline}\label{sect3-eq19}
\phi\, (\bfa,x)
= \lim_{\{\bfN_0\in (\N^*)^m\,:\, \min \bfN_{0} \rightarrow +\infty\}} \bigg(
\cdots \bigg(\lim_{\{\bfN_{k} \in (\N^*)^m\,:\, \min \bfN_k \rightarrow +\infty\}} \bigg(
\sum_{\bfnu_0 \prec \bfN_0} \\
\bigg(
\sum_{\bfnu_k\prec \bfN_k} \binom{\bfN_k}{\bfnu_k}
\, \Big(\frac{1-\bfrho^{-k}\cdot \bfa}{2}\Big)^{\bfnu_k} \, \Big(\frac{1+\bfrho^{-k}\cdot \bfa}{2}\Big)^{\bfN_k - \bfnu_k}
\bigg( \sum_{\bfnu_{k-1}\prec \bfN_{k-1}} \cdots \sum_{\bfnu_{1}\prec \bfN_{1}}
\\
\prod_{\kappa = 0}^{k-1} \Big(
\binom{\bfN_{\kappa}}{\bfnu_{\kappa}}
\Big(\frac{1}{2}
\Big(1 - \bfrho \cdot \Big({\bf 1} - 2\, \frac{\bfnu_{\kappa+1}}{\bfN_{\kappa+1}}\Big)\Big)\Big)^{\bfnu_{\kappa}} \Big(\frac{1}{2}
\Big(1 + \bfrho \cdot \Big({\bf 1} - 2\, \frac{\bfnu_{\kappa+1}}{\bfN_{\kappa+1}}\Big)\Big)\Big)^{\bfN_{\kappa}- \bfnu_{\kappa}}\Big)\Big)\bigg)\bigg) \\
\phi\, \Big(1 - 2\, \frac{\bfnu_{0}}{\bfN_{0}},x\Big)\bigg) \bigg) \cdots \bigg)
\end{multline}
uniformly on any compact subset of $\big(\prod_{j=1}^m \big]-\rho_j^k\, r(T_j),\rho_j^k\, r(T_j)\big[\big)\times {\bf R}$, where one denotes $\bfrho^k\cdot \bfalpha := (\rho_1^k\, \alpha_1,...,\rho_m^k\, \alpha_m)$ for any
$\bfalpha\in \R^m$ and $k\in \Z$.
\end{theorem}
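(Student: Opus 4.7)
The plan is by induction on $k\ge 0$. The base case $k=0$ reduces to Lemma~\ref{sect3-lem2}: with $\bfrho^{-k}\cdot\bfa=\bfa$ and an empty intermediate product, the nested formula collapses to a single multivariate Bernstein sum at parameter $\bfa$, which converges to $\phi(\bfa,x)$ on compact subsets of $\prod_{j=1}^m\,]-r(T_j),r(T_j)[\,\times\,{\bf R}$.

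For the inductive step the key idea is a self-similar rescaling. Set $\psi(\bfw,x):=\phi(\bfrho\cdot \bfw,x)$ and verify that $\psi$ is $(\bfT/\bfrho)$-predictable: each entire extension $F_\bfJ$ of $\phi$ from $\{a_j\le -T_j,\,j\in J^-\}\cap\{a_j\ge T_j,\,j\in J^+\}$ pulls back, under the coordinatewise dilation $\bfw\mapsto \bfrho\cdot\bfw$, to an entire extension of $\psi$ from $\{w_j\le -T_j/\rho_j,\,j\in J^-\}\cap\{w_j\ge T_j/\rho_j,\,j\in J^+\}$. Since $T_j/\rho_j<T_j$ and $r$ is strictly decreasing on $[0,1[$ by Lemma~\ref{sect3-lem1}, one has $r(T_j/\rho_j)>r(T_j)>\rho_j>1$, so the same vector $\bfrho$ remains admissible for $\psi$ with the new predictability parameter $\bfT/\bfrho$.

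The heart of the argument is the ``Bernstein $\bfrho$-pseudo-autosimilarity'' identification. Grouping the sum $\sum_{\bfnu_0\prec\bfN_0}$ with the $\kappa=0$ factor of the product rewrites it as a single multivariate Bernstein sum for $\phi$ at parameter $\bfrho\cdot({\bf 1}-2\bfnu_1/\bfN_1)$. Because $1<\rho_j<r(T_j)$, this parameter always lies in the \emph{fixed} compact set $\prod_j[-\rho_j,\rho_j]\subset\subset \prod_j\,]-r(T_j),r(T_j)[$ regardless of $(\bfnu_1,\bfN_1)$; Lemma~\ref{sect3-lem2} therefore yields, as $\min\bfN_0\to+\infty$, convergence of this block to $\phi(\bfrho\cdot({\bf 1}-2\bfnu_1/\bfN_1),x)=\psi({\bf 1}-2\bfnu_1/\bfN_1,x)$ uniformly in $(\bfnu_1,\bfN_1)$ and locally uniformly in $x$. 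After this inner limit and the index shift $(\bfnu_j,\bfN_j)\leftarrow(\bfnu_{j-1},\bfN_{j-1})$ for $j=1,\ldots,k$ --- under which $\bfrho^{-k}\cdot\bfa$ becomes $\bfrho^{-(k-1)}\cdot(\bfrho^{-1}\bfa)$, matching the new innermost Bernstein weight --- the surviving nested expression is precisely the $(k-1)$-formula of the theorem applied to $\psi$ at parameter $\bfrho^{-1}\bfa$. The inductive hypothesis, applied to the $(\bfT/\bfrho)$-predictable $\psi$ with the same $\bfrho$, then yields convergence to $\psi(\bfrho^{-1}\bfa,x)=\phi(\bfa,x)$ uniformly on compact subsets of $\prod_j\,]-\rho_j^{k-1}r(T_j/\rho_j),\rho_j^{k-1}r(T_j/\rho_j)[\,\times\,{\bf R}$ in $\bfrho^{-1}\bfa$, which in the original variable $\bfa$ contains the required set $\prod_j\,]-\rho_j^k r(T_j),\rho_j^k r(T_j)[\,\times\,{\bf R}$.

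The main technical obstacle is the interchange of limits: the inner limit $\lim_{\min\bfN_0\to+\infty}$ must be shown to commute with the remaining iterated limits and with the finite but $\bfN$-dependent sums $\sum_{\bfnu_j\prec\bfN_j}$, whose Bernstein weights need not be nonnegative since $\bfrho\cdot({\bf 1}-2\bfnu_{\kappa+1}/\bfN_{\kappa+1})\notin [-1,1]^m$ in general. What makes this work is that the Bernstein parameters at every level stay inside a \emph{fixed} compact subset of the relevant convergence domain --- inside $\prod_j[-\rho_j,\rho_j]$ at the intermediate levels, and inside $\bfrho^{-k}\cdot\prod_j\,]-\rho_j^k r(T_j),\rho_j^k r(T_j)[=\prod_j\,]-r(T_j),r(T_j)[$ at the deepest one. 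The uniform polynomial bound~\eqref{sect1-eq5} on the scaled Bernstein operators, applied at these fixed compacta, provides $\bfN$-independent control that propagates the $o_{\min\bfN_0}(1)$ error through the successive nested Bernstein kernels without amplification, closing the induction.
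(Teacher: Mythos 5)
Your proof is correct and follows essentially the same route as the paper. Both arguments hinge on the rescaling $\bfw\mapsto\bfrho\cdot\bfw$ (the paper works with $\phi^{[k]}(\bfa,x):=\phi(\bfrho^k\cdot\bfa,x)$ where you work with $\psi=\phi^{[1]}$), both feed the rescaled function back into Lemma~\ref{sect3-lem2} using the compactness of $\bfrho\cdot[-1,1]^m$ inside $\prod_j\,]-r(T_j),r(T_j)[$, and both close an induction on $k$; your explicit check via the monotonicity of $T\mapsto r(T)$ that $\bfrho$ remains admissible for the $(\bfT/\bfrho)$-predictable function $\psi$, and that the resulting domain contains the claimed one, makes the inductive bookkeeping cleaner than the paper's. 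One small quibble: the appeal to~\eqref{sect1-eq5} for the interchange of limits is not the precise tool you need, nor is it necessary --- since $\bfN_1,\dots,\bfN_k$ are held fixed while $\min\bfN_0\to+\infty$, the outer sums are finite with fixed (albeit complex) weights, so the uniform convergence of the innermost block to $\psi\bigl({\bf 1}-2\,\bfnu_1/\bfN_1,x\bigr)$ passes through them immediately; the bound~\eqref{sect1-eq5} is already buried inside Lemma~\ref{sect3-lem2} (via Theorem~\ref{sect2-thm1}) and does not need to be re-invoked at this outer layer.
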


\begin{proof}
For each $k\in \N$, let $\phi^{[k]} : \R^m \times {\bf R} \longrightarrow \C$ be the continuous function
defined by $\phi^{[k]}\, (\bfa,x) := \phi\, (\bfrho^k \cdot \bfa,x)$.
For any partition $\{1,...,m\} = J\,\cup\, J'$ of $\{1,...,m\}$ and any $\bfa\in \R^m$, let also
$\bfa^{[k]}_J = \pi_J (\bfrho^k \cdot \bfa)$ and $\bfa^{[k]}_{J'}=\pi_{J'}(\bfrho^k\cdot \bfa)$. For any pair
$\bfJ = (J^-,J^+)$ of disjoint subsets of $\{1,...,m\}$ with $J^- \cup J^+ \not=\emptyset$ and any
$\bfa \in \R^m$ such that $a_j\leq -T_j \leq - T_j/\rho_j^k$ for $j\in J^-$,
$a_j\geq T_j \geq T_j/\rho_j^k$ for $j\in J^+$ and any $x\in {\bf R}$ and
$|a_j|\leq 1$ for $j\in J'$, one has
\begin{equation}\label{sect3-eq20}
\phi^{[k]}\, (\bfa, x) = F_{\bfJ, \bfa^{[k]}_{J'},x} (\bfa^{[k]}_J).
\end{equation}
It follows then from Lemma \ref{sect3-lem2} that
\begin{equation}\label{sect3-eq21}
\phi^{[k]}\, (\bfa,x) = \lim\limits_{\min_j N_j \rightarrow +\infty}
\sum_{\bfnu \prec \bfN} \binom{\bfN}{\bfnu}
\, \Big(\frac{1-\bfa}{2}\Big)^{\bfnu} \, \Big(\frac{1+\bfa}{2}\Big)^{\bfN - \bfnu}\, \phi^{[k]}\, \Big({\bf 1} - 2\, \frac{\bfnu}{\bfN},x\Big)
\end{equation}
uniformly in $(\bfa,x)$ on any compact subset of $\big(\prod_{j=1}^m ]-r(T_j),r(T_j)[\big) \times {\bf R}$.   When $k\in \N^*$, \eqref{sect3-eq21} can be reformulated as the {\it Bernstein $\bfrho$-pseudo-autosimilarity relation}
\begin{multline}\label{sect3-eq22}
\phi\, (\bfa,x) \\
= \lim\limits_{\min \bfN_k\rightarrow +\infty}
\sum_{\bfnu_k \prec \bfN_k} \binom{\bfN_k}{\bfnu_k}
\, \Big(\frac{1-\bfrho^{-k}\cdot \bfa}{2}\Big)^{\bfnu_k} \, \Big(\frac{1+\bfrho^{-k}\cdot \bfa}{2}\Big)^{\bfN_k - \bfnu_k}\, \phi \Big(\bfrho^k\cdot \Big({\bf 1} - 2\, \frac{\bfnu_k}{\bfN_k}\Big),x\Big)
\end{multline}
uniformly in $(\bfa,x)$ on any compact subset of $\big(\prod_{j=1}^m ]-\rho_j^k r(T_j),\rho_j^k r(T_j)[\big) \times {\bf R}$. Since $\bfrho\cdot [-1,1]^m$ is a compact subset of
$\prod_{j=1}^m ]-r(T_j),r(T_j)[$, $\bfrho^k \cdot |-1,1]^m$ is a compact subset of
$\prod_{j=1}^m ]-\rho_j^{k-1} r(T_j), \rho_j^{k-1} r(T_j)[$. When $k=1$, Lemma \ref{sect3-lem2} implies than that for any $\bfnu_1 \prec \bfN_1$
\begin{multline*}
\phi\, \Big(\bfrho \cdot \Big({\bf 1} - 2\, \frac{\bfnu_1}{\bfN_1}\Big),x\Big)
= \lim\limits_{\min \bfN_0\rightarrow +\infty} \\
\sum_{\bfnu_0 \prec \bfN_0} \binom{\bfN_0}{\bfnu_0}
\Big(\frac{1}{2}
\Big(1 - \bfrho\cdot \Big({\bf 1} - 2\, \frac{\bfnu_1}{\bfN_1}\Big)\Big)\Big)^{\bfnu_0} \Big(\frac{1}{2}
\Big(1 + \bfrho\cdot \Big({\bf 1} - 2\, \frac{\bfnu_1}{\bfN_1}\Big)\Big)\Big)^{\bfN_0- \bfnu_0}
\phi\, \Big( 1 - 2\, \frac{\bfnu_0}{\bfN_0},x\Big)
\end{multline*}
uniformly with respect to $\bfnu_1 \prec \bfN_1$ and $x$ in any compact subset of ${\bf R}$.
When $k\geq 2$, one has that for each $\bfnu_k\prec \bfN_k$ and each compact subset
${\rm K}$ of ${\bf R}$,
\begin{multline}\label{sect3-eq23}
\phi\, \Big(\bfrho^k\cdot \Big({\bf 1} - 2\, \frac{\bfnu_k}{\bfN_k}\Big),x\Big)
= \lim\limits_{\min \bfN_{k-1}\rightarrow +\infty} \\
\sum_{\bfnu_{k-1} \prec \bfN_{k-1}} \binom{\bfN_{k-1}}{\bfnu_{k-1}}
\Big(\frac{1}{2}
\Big(1 - \bfrho\cdot \Big({\bf 1} - 2\, \frac{\bfnu_k}{\bfN_k}\Big)\Big)\Big)^{\bfnu_{k-1}} \Big(\frac{1}{2}
\Big(1 + \bfrho \cdot \Big({\bf 1} - 2\, \frac{\bfnu_k}{\bfN_k}\Big)\Big)\Big)^{\bfN_{k-1}- \bfnu_{k-1}}\\
\phi\, \Big(\bfrho^{k-1}\cdot \Big(1 - 2\, \frac{\bfnu_{k-1}}{\bfN_{k-1}}\Big),x\Big)
\end{multline}
uniformly with respect to $\bfnu_k\prec \bfN_k$ and $x\in {\rm K}$.
The asymptotic formula \eqref{sect3-eq19} follows then inductively on $k$.
\end{proof}

\section{Examples related to harmonic analysis}\label{sect4}

Examples related to harmonic analysis which illustrate the results in \textsection \ref{sect2} or  \textsection \ref{sect3} are concerned with the case where
${\bf R}= \R^d\subset \C^d$, so that one denotes $z=(z_1,...,z_d)$ instead of $x$ the parameter in Theorems \ref{sect2-thm1} or \ref{sect2-thm2} (both with $r_j=+\infty$ for $j=1,...,m$).
The function $\phi$ is, from now on, the restriction to $\R^m \times {\bf R} \subset \R^m \times \C^d$ of an entire function $\Phi\in H(\C^m \times \C^d)$. We consider in particular the cases where $m=d,2d,3d$, with
\begin{equation}\label{sect4-eq1}
\begin{split}
\phi~&: (\bfomega,z) \in \R^d \times \C^d \longmapsto  G_1\, (i\, \bfomega \cdot z) := G_1\, (i\, \omega_1\, z_1,...,i\, \omega_d\, z_d) \\
\phi~&: (\bftau,z) \in \R^d \times \C^d \longmapsto  G_2\, (z - \bftau)\, G_1\, (i \,\bfomega \cdot z)
:= G_2\, (z_1 - \tau_1,...,z_d - \tau_d)\, G_1\, (i\, \bfomega \cdot z)
\\
\phi~&: (\bftau,\bfomega, z) \in \R^{2d} \times \C^d  \longmapsto G_2\, (z - \bftau/2) \, \overline{G_2\, (\overline z + \bftau/2)}\, G_1 (i\, \bfomega\cdot \bftau)\\
\phi~&: (\bftau, \bfalpha, z) \in \R^{2d} \times \C^d  \longmapsto 2^{-\bfalpha/2}\, G_3\Big((z -\bftau)\cdot \frac{1}{2^\bfalpha}\Big) := 2^{-\bfalpha/2}\, G_3\Big( \frac{z_1 - \tau_1}{2^{\alpha_1}},...,\frac{z_d-\tau_d}{2^{\alpha_d}}\Big) \\
\phi~&: (\bftau,\bfalpha,\bfomega, z) \in \R^{3d} \times \C^d  \longmapsto 2^{-\bfalpha/2}\, G_3\Big((z -\bftau)\cdot \frac{1}{2^\bfalpha}\Big)\, G_1(\bfomega \cdot z),
\end{split}
\end{equation}
where $G_1,G_2,G_3\in H(\C^d)$. The set of variables $\bfomega, \bftau, 2^{\bfalpha}$ vary respectively in the frequency domain $(\R^d)^\star\simeq \R^d$, the space domain $\R^d$, and the scale domain
$]0,+\infty[^d$ for $d$-variate complex valued signals. The different cases in \eqref{sect4-eq1} correspond then to the extension from $\R^d$ to $\C^d$ of elementary atoms respectively in Fourier analysis, Gabor analysis (when $(G_2)_{|\R^d}$ is a $L^1$-normalized centered gaussian atom), Fourier-Wigner-Ville analysis ($(G_2)_{|\R^d}$ being a superposition of tensorized gaussian chirps), time-scale analysis, time-scale-frequency analysis, both  with respect to a $L^2$-normalized wavelet $\Psi= (G_3)_{|\R^d}$ with bounded spectrum. \\

One considers as first illustration the (univariate) Schr\"odinger Cauchy problem for the free-particle
\begin{equation}\label{SECT5-eq0}
(\mathscr S_0) : \Big( i\, \frac{\partial}{\partial t} + \frac{\partial^2}{\partial x^2}\Big)\, \psi(t,x) = 0\quad (t>0,\ x\in \R) \,;\, \psi (0,x)= \varphi(x)
\end{equation}
The metric space of parameters ${\bf R}$ is for the moment either ${\bf R}= \R$ or ${\bf R} = ]0,\infty[ \times \R$, both equipped with their usual distance. We denote them respectively as $\boldsymbol R$ (with current point $x$) and $\boldsymbol {\mathcal R}$ (with current point $(t,x)$).
We recall that the fundamental solution for the Schr\"odinger operator \eqref{SECT5-eq0} in $\mathcal D'(\overline{\boldsymbol {\mathcal R}})$ is
$$
(t,x) \longmapsto \Big[
\frac{1}{\sqrt {4i\pi t}}\, \exp \Big( i\, \frac{x^2}{4t}\Big)\Big] = [\mathcal G_0(t,x)].
$$
The following two elementary lemmas describe the evolution under $\mathscr S_0$ respectively of univariate
chirps ($d=1$ and $\phi~: (\tau,\omega,z) \longmapsto \exp (i (\omega + (\tau/2) \, z)\, z)$) and univariate gaussian chirps ($d=1$ and $\phi~: (\tau_1,\tau_2,\alpha,\omega, z) \longmapsto
\exp (i (\omega + (\tau_1/2)\, z)\, z - (z-\tau_2)^2/2^\alpha)$). As functions of the complex variable $z$, observe that such functions $\phi$ belong to
$$
A_2(\C) = \{F \in H(\C)\,:\, |F(z)| = O(\exp (B|z|^2)\ \text{for some}\ B>0\},
$$
thus lie beyond
$$
A_1(\C) = \{F \in H(\C)\,:\, |F(z)| = O(\exp (B|z|)\ \text{for some}\ B>0\},
$$
which is the continuity domain for the Fresnel type operator given by convolution with $\mathscr G_0$ considered for example in \cite[\textsection 5.1]{CSSY22} or \cite{ABCS22}.

\begin{lemma}[evolution of chirps]\label{SECT5-lem1}
For any $(\tau,\omega)\in \R^2$ and any $(t,x)\in \boldsymbol {\mathcal R}$ such that
$2 \tau t \not=-1$,
\begin{equation}\label{SECT5-eq1}
\int_\R \exp \Big( i\, \Big(\Big( \omega +\frac{\tau}{2}\, y\Big)\, y\Big)\Big)\, \mathcal G_0 (t,x-y)\, dy
=
\frac{1}{\sqrt{\nu_\tau(t)}}\,
\exp \Big (i\, \frac{1}{\nu_\tau(t)} \Big( \Big( \omega + \frac{\tau}{2}\, x\Big)\, x - \omega^2\, t \Big),
\end{equation}
where $\nu_\tau(t):= 1 + 2\tau t$ and $\sqrt {-\xi} := i \sqrt{\xi}$ for $\xi > 0$.
One defines in this way a distribution $\psi_{\omega}^\tau \in L^\infty_{\rm loc} (\boldsymbol {\mathcal R})$ with singular support empty when $\tau \geq 0$ or equal to $\{(t,x)\in \boldsymbol {\mathcal R}\,:\, \nu_\tau(t)= 0\}$ when $\tau <0$.
\end{lemma}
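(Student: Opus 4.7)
The plan is to recognize the integral as a Fresnel-type Gaussian integral and compute it by completing the square, then carefully tracking the branch of the square root via analytic continuation, and finally reading off the singular support from the location of zeros of the prefactor $1/\sqrt{\nu_\tau(t)}$.

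First I would write out the integrand by combining exponents. Since
$$
\exp\Big(i\Big(\omega+\frac{\tau}{2}y\Big)y\Big)\cdot \mathcal G_0(t,x-y)
= \frac{1}{\sqrt{4i\pi t}}\,
\exp\Big(i A(t,\tau)\, y^2 + i B(t,x,\omega)\, y + i\,\frac{x^2}{4t}\Big),
$$
with $A(t,\tau) := \tau/2 + 1/(4t) = \nu_\tau(t)/(4t)$ and $B(t,x,\omega) := \omega - x/(2t)$, completing the square in $y$ transforms the exponent into
$$
iA\Big(y + \frac{B}{2A}\Big)^2 - i\,\frac{B^2}{4A} + i\,\frac{x^2}{4t}.
$$
After the substitution $y\mapsto y - B/(2A)$ (which is legitimate either in the sense of oscillatory integrals or by deforming contours in $\C$), the remaining integral is the pure Fresnel integral $\int_{\R}\exp(iAy^2)\,dy$.

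To handle convergence and branches simultaneously, I would regularize by analytic continuation: replace $t$ by $t - i\varepsilon$ with $\varepsilon>0$ (or equivalently replace $iA$ by $iA - \varepsilon$), evaluate the absolutely convergent Gaussian integral via $\int_\R e^{-zy^2}\,dy=\sqrt{\pi/z}$ on the principal branch (which is well-defined for $\mathrm{Re}\,z>0$), and then let $\varepsilon\downarrow 0$, picking up the convention $\sqrt{-\xi}=i\sqrt{\xi}$ for $\xi>0$ precisely as stated. This yields
$$
\int_\R e^{iAy^2}\,dy = \sqrt{\frac{i\pi}{A}} = \sqrt{\frac{4i\pi t}{\nu_\tau(t)}},
$$
valid as long as $\nu_\tau(t)\ne 0$. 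Multiplying by the prefactor $1/\sqrt{4i\pi t}$, the Fresnel normalizations cancel cleanly and leave the prefactor $1/\sqrt{\nu_\tau(t)}$ announced in \eqref{SECT5-eq1}.

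It remains to simplify $i(x^2/(4t) - B^2/(4A))$. A short algebraic computation using $\nu_\tau(t)-1=2\tau t$ gives
$$
\frac{x^2}{4t} - \frac{B^2}{4A}
= \frac{1}{\nu_\tau(t)}\Big(\Big(\omega + \frac{\tau}{2}x\Big)x - \omega^2 t\Big),
$$
which matches \eqref{SECT5-eq1}. For the statement about the singular support, note that $(t,x)\mapsto \nu_\tau(t)^{-1/2}\exp(i(\cdots)/\nu_\tau(t))$ is real-analytic on the open set where $\nu_\tau(t)\ne 0$; when $\tau\ge 0$ one has $\nu_\tau(t)=1+2\tau t>0$ for all $t>0$, so the singular support is empty, while for $\tau<0$ the unique zero $t=-1/(2\tau)>0$ of $\nu_\tau$ gives precisely the horizontal line of singularities. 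The main technical obstacle is the correct tracking of branches in the Fresnel integral; the $-i\varepsilon$ regularization (equivalently, approaching the real $t$-axis from $\{\mathrm{Im}\,t<0\}$) makes this bookkeeping rigorous and compatible with the convention fixed in the statement.
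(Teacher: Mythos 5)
Your proof is correct and follows essentially the same route as the paper's: isolate the oscillatory Gaussian by pulling out $\exp(ix^2/4t)$, complete the square in $y$, and evaluate the resulting Fresnel integral while tracking the branch of the square root. The one genuine difference is in how the branch is handled: the paper treats the cases $\lambda_\tau(t)>0$ and $\lambda_\tau(t)<0$ separately, writing the Fresnel factor explicitly and conjugating in the second case to reduce to the first, whereas you fold both cases into a single computation via the $t\mapsto t-i\varepsilon$ (equivalently $iA\mapsto iA-\varepsilon$) regularization and a continuity argument, which also justifies the identity $\sqrt{i\pi/A}/\sqrt{4i\pi t}=1/\sqrt{\nu_\tau(t)}$ without having to worry about when the square root of a quotient equals the quotient of square roots. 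Your unified version is a bit cleaner and automatically enforces the stated convention $\sqrt{-\xi}=i\sqrt{\xi}$, since $\nu_\tau(t-i\varepsilon)=\nu_\tau(t)-2i\tau\varepsilon$ approaches the negative real axis from above precisely when $\tau<0$, but at bottom it is the same computation. The algebraic simplification of the phase and the singular-support remark are both correct.
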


\begin{proof}
For any $\tau \in \R$ and $t>0$, let
$$
\lambda(t)=\lambda_\tau (t) := \frac{\tau}{2} + \frac{1}{4t} = \frac{1 + 2\tau t}{4t}.
$$
For any $(t,x)\in \boldsymbol {\mathcal R}$, one has then
\begin{multline}\label{SECT5-eq2}
\frac{1}{\sqrt{4\pi i t}} \int_{\R}
\exp \Big( i\, \Big(\Big( \omega +\frac{\tau}{2}\, y\Big)\, y + \frac{(x-y)^2}{4t}\Big)\Big)\, dy \\
= \frac{1}{\sqrt{4\pi it}}\, \exp \Big( i\, \frac{x^2}{4t}\Big)\, \int_\R
\exp \Big( i\, \Big( \lambda (t)\, y^2 - y\, \Big( \frac{x}{2t} -\omega \Big)\Big)\, dy.
\end{multline}
When $\lambda (t)>0$,
\begin{multline}\label{SECT5-eq3}
\frac{1}{\sqrt{4\pi i t}}\, \int_\R
\exp \Big( i\, \Big( \lambda (t)\, y^2 - y\, \Big( \frac{x}{2t} -\omega \Big)\Big)\, dy \\
= \frac{1}{\sqrt{\lambda (t)}}\, \Big(\frac{1}{\sqrt{4\pi i t}}\,
\int_\R
\exp \Big( i\, \Big(v - \frac{1}{2\sqrt{\lambda (t)}}\, \Big(\frac{x}{2t} -\omega \Big) \Big)^2\Big)\, dv\Big)\,
\exp \Big( -\frac{i}{4 \lambda (t)}\, \Big(\frac{x}{2t} - \omega \Big)^2\Big) \\
= \frac{1}{\sqrt{\lambda (t)}}
\, \exp \Big( - \frac{i}{4t}\, \frac{1}{1+2\tau t} \, (x-2\omega t)^2\Big).
\end{multline}
When $\lambda (t)<0$,
\begin{multline}\label{SECT5-eq3bis}
\frac{1}{\sqrt{4\pi i t}}\, \int_\R
\exp \Big( i\, \Big( \lambda (t)\, y^2 - y\, \Big( \frac{x}{2t} -\omega \Big)\Big)\, dy= \\
\frac{1}{\sqrt{|\lambda (t)|}}\,\Big(\overline{ \frac{1}{\sqrt{- 4\pi i t}}\, \int_\R
\exp \Big( i\, \Big(v  + \frac{1}{2 \sqrt{|\lambda(t)|}}\, \Big( \frac{x}{2t} -\omega \Big)\Big)^2\Big)\, dy }\Big)\,
\exp\Big( \frac{i}{4t} \frac{1}{|1 + 2\tau t|}\, (x-2\omega t)^2\Big)\\
= \frac{1}{\sqrt{\lambda (t)}}\, \exp\Big( - \frac{i}{4t} \frac{1}{1 + 2\tau t}\, (x-2\omega t)^2\Big).
\end{multline}
Lemma \ref{SECT5-lem1} follows from \eqref{SECT5-eq2} combined with either \eqref{SECT5-eq3} or \eqref{SECT5-eq3bis}.
\end{proof}

\begin{lemma}[evolution of gaussian chirps]\label{SECT5-lem2}
For any $(\tau_1,\tau_2,\alpha)\in \R^3$, $\omega \in \R$ and any $(t,x)\in \boldsymbol {\mathcal R}$,
\begin{multline}\label{SECT5-eq5}
\int_\R \exp \Big(
i\, \Big( \omega + \frac{\tau_1}{2}\, y\Big)\, y - \frac{(y-\tau_2)^2}{2^\alpha}\Big)\, \mathcal G_0(t, x-y)\, dy \\
= \frac{\gamma_{\bftau,\alpha} (t)}{\sqrt{\nu_{\tau_1,\alpha}(t)}}\, \exp
\Big(\frac{1}{\nu_{\tau_1,\alpha} (t)}\, \Big(i\Big( \omega +
\frac{\tau_1}{2}\, x\Big)\, x   - \frac{(x-\tau_2)^2}{2^\alpha} - i\, (\omega + \tau_1 \tau_2)^2\, t\Big),
\end{multline}
where $\nu_{\tau_1,\alpha}(t) =
1 + 2\tau_1 t + i\, 2^{2-\alpha} t$ and
\begin{equation*}
\gamma_{\bftau,\alpha} (t) = \exp \Big(\frac{i}{\nu_{\tau_1,\alpha}(t)}
(\nu_{\tau_1,\alpha} (t)-1) \, \tau_2\, (\omega +\tau_1\tau_2\)\Big).
\end{equation*}
\end{lemma}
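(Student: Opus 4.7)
The plan is to reduce the statement to Lemma \ref{SECT5-lem1} via a translation and an analytic continuation in the chirp parameter. First I would perform the substitution $y = \tau_2 + s$ in the integral. A direct expansion gives
$$
i\,(\omega + \tau_1\, y/2)\,y \,=\, i\,\tau_2\,(\omega + \tau_1\,\tau_2/2)\, +\, i\,(\tilde\omega + \tau_1\, s/2)\,s,\qquad \tilde\omega := \omega + \tau_1\,\tau_2,
$$
while $(y-\tau_2)^2 = s^2$ and $x - y = \tilde x - s$ with $\tilde x := x - \tau_2$. Pulling out the $s$-independent phase reduces the computation to integrating a centered Gaussian-modulated chirp in $s$ against $\mathcal G_0(t,\tilde x - s)$.

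Next I would absorb the Gaussian into the chirp by the identity
$$
i\,(\tilde\omega + \tau_1\, s/2)\,s \,-\, s^2/2^\alpha \,=\, i\,(\tilde\omega + \tilde\tau_1\, s/2)\,s,\qquad \tilde\tau_1 := \tau_1 + i\cdot 2^{1-\alpha},
$$
so the inner integral becomes, formally, the object computed in Lemma \ref{SECT5-lem1} but with the real parameter $\tau$ replaced by $\tilde\tau_1 \in \{\mathrm{Im}\,\tau > 0\}$. To legitimise this substitution, I would invoke analytic continuation in $\tau$: both sides of \eqref{SECT5-eq1} depend holomorphically on $\tau$ in the upper half-plane (the left-hand side by absolute convergence, since $\mathrm{Im}\,\tau>0$ provides Gaussian decay at infinity; the right-hand side as an elementary function of $\nu_\tau(t) = 1+2\tau t$, which remains in the upper half-plane so that the principal branch of the square root applies continuously), and they agree on the real axis. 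This yields the prefactor $1/\sqrt{\nu_{\tilde\tau_1}(t)} = 1/\sqrt{\nu_{\tau_1,\alpha}(t)}$ with the correct branch, together with an explicit exponent.

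The final step is purely algebraic: put every contribution over the common denominator $\nu_{\tau_1,\alpha}(t)$, exploit the cancellation
$$
i\,\tau_2\,(\omega + \tau_1\,\tau_2/2) + i\,\tilde\omega\,\tilde x + i\,\tau_1\,\tilde x^2/2 \,=\, i\,(\omega + \tau_1\, x/2)\,x,
$$
which collapses the chirp back to the original variable $x$, and rewrite the remaining $t$-dependent terms in $\omega,\tau_1,\tau_2$ using $\nu_{\tau_1,\alpha}(t) - 1 = 2t\,\tilde\tau_1$. This isolates the main exponent $i(\omega + \tau_1 x/2)\,x - (x-\tau_2)^2/2^\alpha - i(\omega+\tau_1\tau_2)^2\,t$ from the residual phase $\gamma_{\bftau,\alpha}(t)$. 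The main obstacle is the bookkeeping in this final exponent simplification; as a cross-check I would perform the alternative direct computation by completing the square in $y$, observing that the coefficient of $y^2$ equals $A := i\,\nu_{\tau_1,\alpha}(t)/(4t)$ and that $\mathrm{Re}(-A) = 2^{-\alpha} > 0$, which simultaneously justifies the requisite Gaussian contour shift and identifies the prefactor, and then verify that the resulting $C - B^2/(4A)$ agrees with the exponent produced by the first approach.
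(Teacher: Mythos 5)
Your route is genuinely different from the paper's and it works. The paper proves Lemma~\ref{SECT5-lem2} by a direct completion of the square in $y$ (introducing $\mu(t)=-i\nu_{\tau_1,\alpha}(t)/(4t)$, doing the Gaussian integral with $\mathrm{Re}\,\mu>0$, and then rearranging as in \eqref{SECT5-eq5-3}), whereas you translate by $\tau_2$, absorb the Gaussian into a complex chirp parameter $\tilde\tau_1=\tau_1+i\,2^{1-\alpha}$, and quote Lemma~\ref{SECT5-lem1} after continuation into $\{\mathrm{Im}\,\tau>0\}$. Your reduction buys re-use of \eqref{SECT5-eq1}; the price is that Lemma~\ref{SECT5-lem1} is stated only for real $\tau$, where the left-hand side is a Fresnel-type (conditionally convergent) integral, so the identity-theorem argument needs a boundary-uniqueness step. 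The cleanest way to discharge this is exactly the ``cross-check'' you sketch: redo the Gaussian integral directly with the coefficient $A=i\,\nu_{\tau_1,\alpha}(t)/(4t)$, whose real part $\mathrm{Re}(-A)=2^{-\alpha}>0$ makes everything absolutely convergent --- which is precisely what the paper does.

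One concrete issue you should flag when you finish the bookkeeping: carried out carefully, both routes produce the residual phase
\[
\gamma_{\bftau,\alpha}(t)=\exp\Big(\frac{i}{\nu_{\tau_1,\alpha}(t)}\,(\nu_{\tau_1,\alpha}(t)-1)\,\tau_2\,\Big(\omega+\frac{\tau_1\tau_2}{2}\Big)\Big),
\]
that is, with $\omega+\tau_1\tau_2/2$ and not $\omega+\tau_1\tau_2$ as printed in the lemma. Your own identity
\[
i\,\tau_2\,(\omega+\tau_1\tau_2/2)+i\,\tilde\omega\,\tilde x+i\,\tau_1\,\tilde x^2/2=i\,(\omega+\tau_1 x/2)\,x
\]
forces this: after dividing through by $\nu$ and grouping, the constant (in $1/\nu$) part of the exponent is $i\tau_2(\omega+\tau_1\tau_2/2)$, and subtracting $\frac{1}{\nu}\,i\tau_2(\omega+\tau_1\tau_2/2)$ yields the $\frac{\nu-1}{\nu}$ prefactor with the halved $\tau_1\tau_2$. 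A sanity check confirms this: set $\omega=0$ and let $\alpha\to+\infty$, so the integrand no longer involves $\tau_2$ and the result must reduce to Lemma~\ref{SECT5-lem1}. Since $\nu-1=2\tau_1 t$ in that limit, the $\tau_2$-terms $\frac{i(\nu-1)\tau_1\tau_2^2}{2\nu}-\frac{i\tau_1^2\tau_2^2 t}{\nu}$ cancel with the halved coefficient, but would leave a spurious $\frac{i\tau_1^2\tau_2^2 t}{\nu}$ with the printed coefficient. So your proposal is correct and in fact exposes a misprint in the stated form of $\gamma_{\bftau,\alpha}$.
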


\begin{proof}
Let
$$
\mu(t) = \mu_{\bftau,\alpha}(t):=  -i\, \frac{1 + 2 \tau_1 t + i\, 2^{2-\alpha}\, t}{4t}.
$$
One has for any $(t,x)\in \boldsymbol {\mathcal R}$ that
\begin{multline}\label{SECT5-eq5-1}
\frac{1}{\sqrt{4\pi i t}} \int_{\R}
\exp \Big(- \frac{(y-\tau_2)^2}{2^\alpha} +
i\, \Big( \Big( \omega + \frac{\tau_1}{2} y \Big) y + \frac{(x-y)^2}{4t}\Big)\Big)\Big)dy \\
= \frac{1}{\sqrt{4\pi i t}}
\, \exp\Big( i \Big( \tau_2 \Big( \omega + \frac{\tau_1 \tau_2}{2}\Big) +
\frac{(x-\tau_2)^2}{4t}\Big)\Big)
\int_\R \exp\Big( - \mu (t) y^2 + iy \, \Big(\omega - \frac{x- \nu_{\tau_1}(t) \tau_2}{2t}\Big)\Big) dy.
\end{multline}
It follows from the analytic continuation principle in
the half space $\{\lambda\in \C\,:\,
{\rm Re}\, \lambda >0\}$ that for any $(t,x)\in \boldsymbol {\mathcal R}$
\begin{multline}\label{SECT5-eq5-2}
\frac{1}{\sqrt{4\pi i t}}\int_\R \exp\Big( - \mu (t)\, y^2 + iy \, \Big(\omega - \frac{x- \nu_{\tau_1} (t) \tau_2}{2t}\Big)\Big)\, dy \\
= \frac{1}{\sqrt{8i\pi t \mu(t)}}\, \int_\R \exp\Big( - \frac{v^2}{2} + i \frac{v}{\sqrt{2
\mu (t)}}\, \Big(\omega - \frac{x- \nu_{\tau_1}(t) \tau_2}{2t}\Big)\Big)\, dv \\
= \frac{1}{\sqrt{\nu_{\tau_1,\alpha}(t)}}\,
\exp \Big( - \frac{1}{4 \mu (t)}\, \Big(\frac{x- \tau_2}{2t} - \omega - \tau_1 \tau_2\Big)^2\Big).
\end{multline}
One has
\begin{equation}\label{SECT5-eq5-3}
\begin{split}
\frac{(x-\tau_2)^2}{4t} \Big(i - \frac{1}{4 t\, \mu(t)}\Big) & = \frac{1}{\nu_{\tau_1,\alpha}(t)}
\,
\Big( i\, \frac{\tau_1}{2} - \frac{1}{2^\alpha}\Big)\, (x-\tau_2)^2 \\
& = \frac{1}{\nu_{\tau_1,\alpha}(t)}
\Big( i\frac{\tau_1}{2}\, x^2 - \frac{(x-\tau_2)^2}{2^\alpha} + i\, \frac{\tau_1 \tau_2^2}{2} - i\, \tau_1 \tau_2 \, x\Big) \\
\exp \Big(i\tau_2 \, \omega + \frac{1}{4 t \mu (t)}
\, (x-\tau_2)\, \omega \Big) & = \exp \Big(i \tau_2 \omega\, \frac{\nu_{\tau_1,\alpha}(t)-1}
{\nu_{\tau_1,\alpha}(t)} +i\, \frac{\omega}{\nu_{\tau_1,\alpha}(t)}\, x\Big) \\
\exp \Big( i\frac {\tau_1 \tau_2^2}{2} + \frac{\tau_1 \tau_2}{4 t
\mu(t)} (x-\tau_2)\Big) & =
\exp \Big( i\Big( \frac{\tau_1 \tau_2^2}{2}\Big(1 -
\frac{2}
{\nu_{\tau_1,\alpha}(t)}\Big) + \frac{\tau_1 \tau_2}{\nu_{\tau_1,\alpha}(t)}\, x\Big) \Big).
\end{split}
\end{equation}
One substitutes the equality \eqref{SECT5-eq5-2} in the right-hand side of \eqref{SECT5-eq5-1}, then the relations \eqref{SECT5-eq5-3} in the equality thus obtained.
\end{proof}

Lemma \ref{SECT5-lem3} concerns the evolution (in terms of the frequency indicator $\omega$) of $x\mapsto \phi(\omega,x)$ when
$\phi(\omega,z) = \exp (i\, |\omega z|)$ (observe that such function $\phi$ fails to be real analytic in $z$ on the real line, hence do not belong to $A_1(\C)$).

\begin{lemma}\label{SECT5-lem3}
For any $\omega \in \R$ and $(t,x) \in \boldsymbol {\mathcal R}$, one has
\begin{multline}\label{SECT5-eq6}
\int_\R \exp (i\, |\omega|\, |y|)\, \mathcal G_0 (t, x-y)\, dy =
\exp (-i \omega^2 \, t)\, \Big( \cos (|\omega|\, x) \\
+
\exp (-i |\omega| x)
\int_0^{-x - 2 t |\omega|} \exp \Big( i \frac{z^2}{4t}\Big)\, dz
- \exp (i |\omega| x) \, \int_0^{-x + 2t|\omega|} \exp \Big( i \frac{z^2}{4t}\Big)\, dz\Big).
\end{multline}
\end{lemma}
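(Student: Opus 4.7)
The plan is to reduce the integral to two Fresnel-type pieces by splitting the domain at $y=0$, where the integrand fails to be smooth. Since only $|\omega|$ appears on both sides of \eqref{SECT5-eq6}, one may assume without loss of generality that $\omega\geq 0$, and write
$$
I(t,x)\;=\;\int_{-\infty}^{0} e^{-i\omega y}\,\mathcal G_0(t,x-y)\,dy\;+\;\int_{0}^{+\infty} e^{+i\omega y}\,\mathcal G_0(t,x-y)\,dy\;=:\;J_-(t,x)+J_+(t,x).
$$

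For each half, I would complete the square in the variable $y$: the identity
$$
\pm i\omega y+\frac{i(x-y)^2}{4t}\;=\;\frac{i}{4t}\bigl(y-(x\mp 2t\omega)\bigr)^2\pm i\omega x-i\omega^2 t
$$
extracts the expected overall phase $e^{\pm i\omega x-i\omega^2 t}$. The affine substitution $u=y-(x\mp 2t\omega)$ then converts each half-line integral into a Fresnel integral over a half-line, giving
$$
J_\pm(t,x)\;=\;e^{\pm i\omega x-i\omega^2 t}\cdot\frac{1}{\sqrt{4i\pi t}}\int_{I_\pm} e^{iu^2/(4t)}\,du,
$$
with $I_+=(2t\omega-x,+\infty)$ and $I_-=(-\infty,-x-2t\omega)$.

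Next I would invoke the classical Fresnel identity $\frac{1}{\sqrt{4i\pi t}}\int_{\R} e^{iu^2/(4t)}\,du=1$, together with the evenness of $u\mapsto e^{iu^2/(4t)}$, to split each half-infinite interval as the contribution $\tfrac12$ coming from $[0,+\infty)$ plus (respectively minus) a finite integral between $0$ and $-x\pm 2t\omega$. Combining $J_-+J_+$, the two $\tfrac12 e^{\pm i\omega x}$ pieces merge into $\cos(|\omega|\,x)$ after factoring out $e^{-i\omega^2 t}$, while the two residual finite integrals assemble into the terms with endpoints $-x\pm 2t|\omega|$ appearing in \eqref{SECT5-eq6}.

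The only genuine subtlety, and the step I would handle with most care, is the bookkeeping of signs and square-root branches when the endpoints $-x\pm 2t|\omega|$ can have arbitrary sign: this is treated via the convention $\int_0^{-a}=-\int_0^{a}$ afforded by the evenness of $u\mapsto e^{iu^2/(4t)}$, exactly as in the branch-tracking already carried out in the proof of Lemma~\ref{SECT5-lem1}. No non-Gaussian machinery (Fresnel auxiliary functions, stationary phase, analytic regularization) is required, since the quadratic coefficient $i/(4t)$ never vanishes on $\boldsymbol{\mathcal R}$, so the Gaussian-integral manipulations above go through verbatim.
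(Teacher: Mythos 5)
Your argument is correct and follows essentially the same route as the paper's own proof: split the integral at $y=0$, complete the square (equivalently, translate to get rid of the linear term), and use the half-line Fresnel normalization $\frac{1}{\sqrt{4i\pi t}}\int_0^\infty e^{iu^2/4t}\,du=\tfrac12$ together with the evenness of $u\mapsto e^{iu^2/4t}$; the only cosmetic difference is that you complete the square directly in $y$ whereas the paper first sets $z=y-x$ and then shifts by $\pm 2t|\omega|$. One remark worth adding to your write-up: carrying your computation through to the end produces a factor $\frac{1}{\sqrt{4i\pi t}}$ in front of each of the two finite Fresnel integrals $\int_0^{-x\mp 2t|\omega|}e^{iz^2/4t}\,dz$, which is absent from \eqref{SECT5-eq6} as printed; this is a small misprint in the lemma statement (and also in the paper's last display before ``Formula \eqref{SECT5-eq6} follows''), not an error in your derivation.
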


\begin{proof}
One has for any $\omega \in \R$ and $(t,x)\in \boldsymbol {\mathcal R}$,
\begin{equation*}
\begin{split}
& \sqrt{4i\pi t}\, \int_\R \exp (i\, |\omega|\, |y|)\, \mathcal G_0 (t, x-y)\, dy \\
& = \exp (-i |\omega|\, x)\, \int_{-\infty}^{-x}
\exp \Big( i\, \Big(\frac{z^2}{4t} - |\omega|\, z\Big)\Big)\, dz + \exp (i |\omega|\, x)\, \int_{-x}^{+\infty}
\exp \Big( i\, \Big(\frac{z^2}{4t} + |\omega|\, z\Big)\Big)\, dz\\
& =
\exp (-i \omega^2 t)
\Big( \exp (-i |\omega|\, x)\,
\int_{-\infty}^{-x - 2t |\omega|} \exp \Big( i
\frac{z^2}{4t}\Big) dz \, +\,   \exp (i |\omega| x)\,
\int_{-x + 2 t |\omega|}^\infty  \exp \Big( i
\frac{z^2}{4t}\Big) dz\Big).
\end{split}
\end{equation*}
Formula \eqref{SECT5-eq6} follows then from the fact that for any $t>0$
$$
\frac{1}{\sqrt{4i\pi t}}\, \int_{-\infty}^0 \exp \Big( i
\frac{z^2}{4t}\Big)\, dz = \frac{1}{\sqrt{4i\pi t}}\, \int_{0}^\infty \exp \Big( i
\frac{z^2}{4t}\Big)\, dz = \frac{1}{2}.
$$
\end{proof}

Next
Lemma \ref{SECT5-lem4}
describe the evolution of $x \mapsto \phi(\tau,\alpha,\omega,x)$, where $\phi$ corresponds to the final situation listed in \eqref{sect4-eq1}.

\begin{lemma}[evolution of time-scale-frequency atoms]\label{SECT5-lem4}
Let $\Psi~: x \longmapsto \Psi(x)$ be the spectrum of a compactly supported integrable signal.
For any $(\tau,\alpha,\omega)\in \R^3$ and $(t,x) \in \boldsymbol{\mathcal R}$,
\begin{multline}\label{SECT5-eq7}
\int_\R
\Psi \Big( \frac{y - \tau}{2^\alpha}\Big)\, \exp (i\, \omega\, y)\, \mathcal G_0(t, x-y)\, dy
\\
= \frac{1}{2\pi}
\int_\R \widehat \Psi(\xi) \exp
\Big( i \Big( - \frac{\tau\, \xi}{2^\alpha} + \Big(\omega + \frac{\xi }{2^\alpha}\Big)\, x -
\Big( \omega + \frac{\xi}{2^\alpha}\Big)^2 t\Big)\Big)\, d\xi.
\end{multline}
\end{lemma}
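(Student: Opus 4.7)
The strategy is to reduce the problem, via Fourier inversion, to the pure-chirp evolution formula already established in Lemma \ref{SECT5-lem1}. Since by hypothesis $\Psi$ is the spectrum of a compactly supported integrable signal, there exists a compact set $K\subset \R$ and an integrable function $g$ supported in $K$ such that, after fixing Fourier-transform conventions consistent with \eqref{SECT5-eq7}, one has $\widehat{\Psi}\in L^1(\R)$ with $\mathrm{supp}\,\widehat{\Psi}\subset K$, and Fourier inversion gives
\begin{equation*}
\Psi(u) = \frac{1}{2\pi}\int_{\R} \widehat{\Psi}(\xi)\, e^{i\xi u}\, d\xi,\qquad u\in \R.
\end{equation*}
Substituting $u = (y-\tau)/2^\alpha$ yields
\begin{equation*}
\Psi\Big(\frac{y-\tau}{2^\alpha}\Big)\, e^{i\omega y}
= \frac{1}{2\pi}\int_{\R} \widehat{\Psi}(\xi)\, e^{-i\xi\tau/2^\alpha}\, e^{i(\omega + \xi/2^\alpha)\, y}\, d\xi.
\end{equation*}

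The next step is to insert this identity into the left-hand side of \eqref{SECT5-eq7} and exchange the order of integration. After the exchange, the inner integral over $y$ is exactly the evolution of a pure chirp with frequency $\omega_\xi := \omega + \xi/2^\alpha$ and quadratic coefficient zero. Lemma \ref{SECT5-lem1} applied with $\tau = 0$ gives $\nu_0(t) = 1$, hence
\begin{equation*}
\int_{\R} e^{i\omega_\xi y}\, \mathcal G_0(t, x - y)\, dy = \exp\Big(i\big(\omega_\xi\, x - \omega_\xi^{\,2}\, t\big)\Big).
\end{equation*}
Putting the $\xi$-dependent phase factor $e^{-i\xi\tau/2^\alpha}$ in place then yields precisely the integrand displayed on the right-hand side of \eqref{SECT5-eq7}, which concludes the argument.

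The main technical point is the legitimacy of the Fubini exchange: the Schr\"odinger propagator $\mathcal G_0(t,\cdot)$ is not integrable on $\R$, so the $y$-integration is purely oscillatory and needs to be read in the sense of Lemma \ref{SECT5-lem1}. The cleanest justification is to regularize by analytic continuation, replacing $t$ by $t - i\varepsilon$ with $\varepsilon>0$ (equivalently, inserting a Gaussian damping $e^{-\varepsilon y^2}$ in the $y$-integral). For such $\varepsilon>0$ the joint integrand is in $L^1(\R_y \times \R_\xi)$ since $\widehat{\Psi}$ has compact support $K$ and $\Psi$ is bounded on $\R$ (Paley--Wiener), so genuine Fubini applies, and on both sides Lemma \ref{SECT5-lem1} delivers explicit formulas depending analytically on the complex parameter $t - i\varepsilon$. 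Letting $\varepsilon \to 0_+$, the right-hand side converges by dominated convergence on the compact $K$ (the $\xi$-integrand is uniformly bounded in $\varepsilon$ by $|\widehat\Psi|$ times a bounded phase), and the left-hand side converges by the very construction of $\psi_{\omega_\xi}^{0}$ in Lemma \ref{SECT5-lem1}. This gives \eqref{SECT5-eq7} as an equality in $L^\infty_{\mathrm{loc}}(\boldsymbol{\mathcal R})$, which is the sense in which the statement is to be understood.
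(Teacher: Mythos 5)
Your proposal is correct and follows essentially the same route as the paper: express $\Psi\big((y-\tau)/2^\alpha\big)e^{i\omega y}$ via Fourier inversion over the compact spectrum $\mathrm{Supp}\,\widehat\Psi$, evolve each plane wave $y\mapsto e^{i(\omega+\xi/2^\alpha)y}$ under $\mathcal G_0(t,\cdot)$ to obtain $\exp\big(i\big((\omega+\xi/2^\alpha)x-(\omega+\xi/2^\alpha)^2 t\big)\big)$, and then integrate in $\xi$. The only real difference is the justification for exchanging the $y$- and $\xi$-integrations: the paper invokes the continuity of Fresnel-type operators on $A_1(\C)=\mathrm{Exp}(\C)$ (citing the treatment in \cite[\S 5.1]{CSSY22} and \cite{ABCS22}), whereas you give a self-contained regularization argument ($t\mapsto t-i\varepsilon$, genuine Fubini, then dominated convergence over the compact support of $\widehat\Psi$); both are valid, and your version is the more elementary of the two.
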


\begin{proof}
Fourier inversion formula implies that for any $y,\omega, \tau,\alpha \in \R$,
$$
\Psi \Big( \frac{x-\tau}{2^\alpha}\Big)\, \exp (i \, \omega\,  y)  = \frac{1}{2\pi} \int_{{\rm Supp}\, \widehat \Psi} \widehat \Psi (\xi) \, \exp \Big( - i
\frac{\tau \xi}{2^\alpha}\Big)\, \exp \Big( i \Big(\omega + \frac{\xi}{2^\alpha}\Big) y\Big)\, d\xi.
$$
Since
$$
\int_{\R}  \exp \Big( i \Big(\omega + \frac{\xi}{2^\alpha}\Big) y\Big)\, \mathcal G_0 (t,x-y)\, dy
= \exp \Big( i \Big( \Big(\omega + \frac{\xi}{2^\alpha}\Big)\, x - i
\Big(\omega + \frac{\xi}{2^\alpha}\Big)^2\, t\Big)\Big)
$$
for any $\xi \in {\rm Supp} \, \widehat \Psi$, $\alpha \in \R$ and $(t,x)\in \boldsymbol {\mathcal R}$, formula \eqref{SECT5-eq7} follows from the continuity properties of Fresnel type operators on ${\rm Exp}\, (\C) = A_1(\C)$, see \cite[\textsection 5.1]{CSSY22} or also \cite{ABCS22}.
\end{proof}

The next two propositions are immediate consequences of Theorem \ref{sect2-thm1}.

\begin{proposition}\label{SECT5-prop1} Let $\alpha_0 \in \R$ and $(t,x)\in \boldsymbol{\mathcal R} \longmapsto \psi_{\alpha_0,\tau,\omega} (t,x)$ be the evolution under
$\mathscr S_0$ of the translated modulated gaussian atom
$$
x \longmapsto 2^{-\alpha_0}\, \exp \Big( - \frac{(x-\tau)^2}{2^{\alpha_0}} + i\, \omega \, x\Big)
$$
involved in Gabor time-frequency analysis. Then $\psi_{\alpha_0,\tau,\omega}$ is the uniform limit
on any compact subset of $\boldsymbol {\mathcal R}$ of the sequence of functions
$$
\Big(
\sum_{\bfnu \prec \bfN = (N_1,N_2)\in (\N^*)^2} \binom{\bfN}{\bfnu}
\Big( \frac{1 - (\tau,\omega)}{2}\Big)^{\bfnu}
\Big( \frac{1 + (\tau,\omega)}{2}\Big)^{\bfN - \bfnu}\, \psi_{\alpha_0, 1- 2 \nu_1/N_1, 1-2 \nu_2/N_2}\Big)_{\bfN = (N_1,N_2)}
$$
when $\min (N_1,N_2) \rightarrow +\infty$, where notations are those introduced in the preamble of \textsection \ref{sect1}.
\end{proposition}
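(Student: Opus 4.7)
The plan is to reduce Proposition \ref{SECT5-prop1} to a direct application of Theorem \ref{sect2-thm1}, specialized to $m=2$, $\bfr = (+\infty,+\infty)$, metric space $({\bf R},{\bf d}) = \boldsymbol{\mathcal R}$ equipped with its Euclidean distance, parameter $\bfa = (\tau,\omega) \in \R^2$, and second variable $(t,x)\in \boldsymbol{\mathcal R}$.

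The first step consists in invoking Lemma \ref{SECT5-lem2} with $\tau_1 = 0$, $\tau_2 = \tau$, $\alpha = \alpha_0$ and multiplying through by the global factor $2^{-\alpha_0}$, which produces the closed-form expression
\begin{equation*}
\psi_{\alpha_0,\tau,\omega}(t,x) = \frac{2^{-\alpha_0}\, \gamma_{(0,\tau),\alpha_0}(t)}{\sqrt{\nu_{0,\alpha_0}(t)}}\,
\exp\Big(\frac{1}{\nu_{0,\alpha_0}(t)}\Big(i\omega x - \frac{(x-\tau)^2}{2^{\alpha_0}} - i\omega^2 t\Big)\Big),
\end{equation*}
where $\nu_{0,\alpha_0}(t) = 1 + i\, 2^{2-\alpha_0}\, t$ and $\gamma_{(0,\tau),\alpha_0}(t) = \exp(i(\nu_{0,\alpha_0}(t)-1)\tau\omega/\nu_{0,\alpha_0}(t))$.

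The second step consists in verifying the two hypotheses of Theorem \ref{sect2-thm1}. Since ${\rm Re}\, \nu_{0,\alpha_0}(t) = 1 > 0$ for every $t>0$, the factor $1/\sqrt{\nu_{0,\alpha_0}(t)}$ is well defined and continuous in $t$ via the principal branch of the square root on the right half-plane. The remaining $(\tau,\omega)$-dependence reduces to the exponential of a polynomial of total degree at most $2$ in $(\tau,\omega)$ with coefficients continuous in $(t,x)$. It follows immediately that, for each fixed $(t,x)\in \boldsymbol{\mathcal R}$, the map $(\tau,\omega) \longmapsto \psi_{\alpha_0,\tau,\omega}(t,x)$ extends to an entire function $f_{(t,x)} \in H(\C^2)$. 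Joint continuity of the closed-form expression in $((\tau,\omega),(t,x))$ on $\C^2 \times \boldsymbol{\mathcal R}$, combined with uniform continuity on compact subsets of $\C^2$, then yields continuity of the map $(t,x) \longmapsto f_{(t,x)}$ from $\boldsymbol{\mathcal R}$ into $H(\C^2)$ endowed with the Fr\'echet topology of uniform convergence on compact subsets.

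The third step is the invocation of Theorem \ref{sect2-thm1} itself, which yields uniform convergence of the Bernstein double sum to $\psi_{\alpha_0,\tau,\omega}(t,x)$ on any compact subset of $\R^2 \times \boldsymbol{\mathcal R}$; restricting to a fixed $(\tau,\omega)\in \R^2$ yields precisely the uniform convergence on compact subsets of $\boldsymbol{\mathcal R}$ claimed in the proposition. No serious obstacle arises: the entire argument hinges on the single fact that $\nu_{0,\alpha_0}(t)$ stays in the right half-plane, which ensures that the explicit formula from Lemma \ref{SECT5-lem2} extends entirely in $(\tau,\omega)$ with continuous dependence on $(t,x)$.
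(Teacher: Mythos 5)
Your proposal is correct and follows exactly the route the paper takes: specialize Lemma \ref{SECT5-lem2} with $\tau_1=0$, $\tau_2=\tau$, $\alpha=\alpha_0$ to get the closed form of $\psi_{\alpha_0,\tau,\omega}$, then apply Theorem \ref{sect2-thm1} with $m=2$, $\bfr=(+\infty,+\infty)$ and ${\bf R}=\boldsymbol{\mathcal R}$. The paper's own proof is a one-line citation of those two ingredients; your version merely spells out the verification that the explicit formula is entire in $(\tau,\omega)$ with continuous dependence on $(t,x)$, which is correct.
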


\begin{proof} It follows from Lemma \ref{SECT5-lem2} with $\tau_1=0$, $\tau_2=\tau$ and $\alpha = \alpha_0$, together with Theorem \eqref{sect2-thm1}.
\end{proof}

\begin{proposition}\label{SECT5-prop2} Let $(t,x)\in \boldsymbol{\mathcal R} \longmapsto \psi_{\tau,\alpha,\omega} (t,x)$ be the evolution under $\mathscr S_0$ of the translated, scaled and modulated atom
$$
x \longmapsto 2^{-\alpha/2}\, \Psi\Big(\frac{x-\tau}{2^\alpha}\Big)\, e^{i\omega x},
$$
where $\Psi$ is a wavelet which spectrum is a compactly supported integrable signal (as the Shannon's or Meyer's wavelet).
Then $\psi_{\tau,\alpha,\omega}$ is the uniform limit
on any compact subset of $\boldsymbol {\mathcal R}$ of the sequence of functions
$$
\Big(
\sum_{\bfnu \prec \bfN = (N_1,N_2,N_3)\in (\N^*)^3} \binom{\bfN}{\bfnu}
\Big( \frac{1 - (\tau,\alpha,\omega)}{2}\Big)^{\bfnu}
\Big( \frac{1 + (\tau,\alpha,\omega)}{2}\Big)^{\bfN - \bfnu}\, \psi_{{\bf 1} - 2 \bfnu/\bfN}\Big)_{\bfN = (N_1,N_2,N_3)}
$$
when $\min (N_1,N_2,N_3) \rightarrow +\infty$, where notations are those introduced in the preamble of \textsection \ref{sect1}.
\end{proposition}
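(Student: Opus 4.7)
The plan is to invoke Theorem~\ref{sect2-thm1} with $m=3$, polyradius $\bfr=(+\infty,+\infty,+\infty)$, metric space $({\bf R},{\bf d})=\boldsymbol{\mathcal R}$ with current point $(t,x)$, and test function $\phi~:\,((\tau,\alpha,\omega),(t,x))\longmapsto \psi_{\tau,\alpha,\omega}(t,x)$, exactly in the spirit of the proof of Proposition~\ref{SECT5-prop1}. Once one has verified that, for each $(t,x)\in \boldsymbol{\mathcal R}$, the function $(\tau,\alpha,\omega)\mapsto \psi_{\tau,\alpha,\omega}(t,x)$ extends to an entire function $f_{(t,x)}$ on $\C^3$, and that the map $(t,x)\mapsto f_{(t,x)}$ is continuous from $\boldsymbol{\mathcal R}$ into $H(\C^3)$, Theorem~\ref{sect2-thm1} immediately delivers the Bernstein approximation claimed in the statement, uniformly on compact subsets of $\boldsymbol{\mathcal R}$.

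To produce the explicit formula, I would multiply the identity of Lemma~\ref{SECT5-lem4} by the normalizing factor $2^{-\alpha/2}$, obtaining
$$
\psi_{\tau,\alpha,\omega}(t,x)= \frac{2^{-\alpha/2}}{2\pi}\int_{{\rm Supp}\, \widehat\Psi} \widehat\Psi(\xi)\,\exp\Big(i\,\Big(-\frac{\tau\xi}{2^\alpha}+\Big(\omega+\frac{\xi}{2^\alpha}\Big)x-\Big(\omega+\frac{\xi}{2^\alpha}\Big)^2 t\Big)\Big)\,d\xi.
$$
Writing $2^{-\alpha}=\exp(-\alpha\log 2)$, each $\xi$-slice of the integrand is an entire function of $(\tau,\alpha,\omega)\in \C^3$ depending continuously on $(t,x)\in \boldsymbol{\mathcal R}$. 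Since ${\rm Supp}\,\widehat\Psi$ is a fixed compact interval $[-\Xi,\Xi]$ and $\widehat\Psi\in L^1$, one may transfer both the entirety and the continuous dependence from the integrand to the integrated expression, provided one obtains uniform majorants of the integrand on any product $K_1\times K_2\times [-\Xi,\Xi]$ with $K_1\subset\subset \C^3$ and $K_2\subset\subset\boldsymbol{\mathcal R}$. The moduli $|2^{-\alpha/2}|=2^{-{\rm Re}\,\alpha/2}$, $|e^{-i\tau\xi/2^\alpha}|=\exp({\rm Im}(\tau\xi/2^\alpha))$, together with the two remaining exponentials, are bounded as soon as $(\tau,\alpha,\omega)\in K_1$, $(t,x)\in K_2$ and $\xi\in[-\Xi,\Xi]$. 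Dominated convergence combined with Morera's theorem applied in each complex variable separately then yields the desired holomorphic extension together with the continuity of $(t,x)\mapsto f_{(t,x)}$ into the Fr\'echet topology of $H(\C^3)$.

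The main obstacle is precisely this uniform control: the quadratic phase $(\omega+\xi/2^\alpha)^2\, t$ and the linear phase $\tau\xi/2^\alpha$ could \emph{a priori} blow up when ${\rm Re}\,\alpha\to-\infty$, but the compactness of $K_1$ forbids this, while the compactness of ${\rm Supp}\,\widehat\Psi$ is what allows one to localize $\xi$ in the first place (and is the reason the hypothesis on $\Psi$ is placed on the spectral side). Once these estimates are checked, the hypotheses of Theorem~\ref{sect2-thm1} are fulfilled with $r_1=r_2=r_3=+\infty$, and specializing the formula~\eqref{sect2-eq0} to $\bfa=(\tau,\alpha,\omega)$ and to $(t,x)\in \boldsymbol{\mathcal R}$ in place of the parameter $x$ reproduces exactly the Bernstein sum of the statement.
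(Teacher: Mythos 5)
Your proposal is correct and follows exactly the route the paper takes: the paper's proof is the one-liner ``apply Lemma~\ref{SECT5-lem4}, then Theorem~\ref{sect2-thm1},'' and you have simply spelled out the verification that $(\tau,\alpha,\omega)\mapsto\psi_{\tau,\alpha,\omega}(t,x)$ extends holomorphically to $\C^3$ with continuous dependence on $(t,x)$ via the compactness of ${\rm Supp}\,\widehat\Psi$, uniform majorants, dominated convergence and Morera. No gap.
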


\begin{proof} One applies Lemma \ref{SECT5-lem4}, then Theorem \ref{sect2-thm1}.
\end{proof}

The next two propositions illustrate Theorem \ref{sect3-thm1} when $d=1$.

\begin{proposition}\label{SECT5-prop3} Let $T \in ]0,1[$ and $\mathcal T_1, \mathcal T_2$, $\mathcal A$
be continuous maps from $[-T,T]$ respectively to $[0,+\infty[$, $\R$, $]-\infty,+\infty]$ such that
$\mathcal T_1 (\pm T) =0$ and $\mathcal A(\pm T) = +\infty$. Let
$$
(t,x) \longmapsto \psi_\omega^{\boldsymbol {\mathcal T}, \mathcal A} (t,x)
$$
be the evolution under $\mathscr S_0$ of the continuous signal
$$
x \longmapsto \begin{cases} \exp (i\, \omega\, x)\ \text{when}\ x\notin [-T,T] \\ \\
\exp \Big( i \, \Big( \omega + \displaystyle{\frac{\mathcal T_1(\omega)}{2}}\, x\Big)\, x -
\frac{1}{2^{\mathcal A (\omega)}} \Big( x - \mathcal T_2 (\omega)\Big)^2\Big) \Big)\  \text{when}\ x\in [-T,T].
\end{cases}
$$
Then $\omega \longmapsto \psi_\omega^{\boldsymbol {\mathcal T}, \mathcal A} (t,x)$ is $T$-predictable with respect to the parameters $(t,x) \in \boldsymbol {\mathcal R}$, hence satisfies Theorem \ref{sect3-thm1} with $T\in ]0,1[$.
\end{proposition}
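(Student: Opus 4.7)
The plan is to reduce $T$-predictability, in the one-variable case $m=1$, to the explicit identification of entire extensions of $\omega \mapsto \psi_\omega^{\boldsymbol{\mathcal T},\mathcal A}(t,x)$ on the two tails $\{\omega\leq -T\}$ and $\{\omega \geq T\}$. The key observation is that although $\varphi_\omega$ is defined as a chirp-gaussian hybrid on $[-T,T]$, outside of and at the boundary of this transition zone in the frequency variable it collapses to a pure chirp, whose free Schr\"odinger evolution is entire in the frequency parameter.

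First I would verify that $\varphi_\omega(x)=\exp(i\omega x)$ for every $x\in \R$ whenever $|\omega|\geq T$. For $|\omega|>T$ the maps $\mathcal T_1, \mathcal T_2, \mathcal A$ are not defined and only the first branch of the piecewise definition applies, namely $\varphi_\omega(x)=\exp(i\omega x)$ everywhere. For $\omega=\pm T$, the hypotheses $\mathcal T_1(\pm T)=0$ and $\mathcal A(\pm T)=+\infty$ imply that the quadratic chirp correction $\exp(i\mathcal T_1(\omega)x^2/2)$ reduces to $1$ and the Gaussian factor $\exp(-(x-\mathcal T_2(\omega))^2/2^{\mathcal A(\omega)})$ also reduces to $1$, so that both branches of the definition coincide at $|\omega|=T$ with $\exp(i\omega x)$.

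Second, I would invoke Lemma \ref{SECT5-lem1} with $\tau=0$ (so that $\nu_0(t)=1$) to compute the Schr\"odinger evolution of this plane wave: for $|\omega|\geq T$ and $(t,x)\in \boldsymbol{\mathcal R}$, one has
\[
\psi_\omega^{\boldsymbol{\mathcal T},\mathcal A}(t,x)
=\int_\R \exp(i\omega y)\,\mathcal G_0(t,x-y)\,dy
=\exp\bigl(i(\omega x - \omega^2 t)\bigr).
\]
The right-hand side is the restriction to $\R$ of the entire function $F_{(t,x)}:w\in\C\longmapsto\exp\bigl(i(wx-w^2 t)\bigr)$, and the map $(t,x)\in\boldsymbol{\mathcal R}\longmapsto F_{(t,x)}\in H(\C)$ is continuous, since for any compact $K\subset\C$ the function $(t,x,w)\mapsto \exp\bigl(i(wx-w^2 t)\bigr)$ is uniformly continuous on compacts of $\boldsymbol{\mathcal R}\times K$.

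Third, in the notation of Definition \ref{sect3-def1} with $m=1$ and $T_1=T$, setting
\[
F_{(\{1\},\emptyset),(t,x)}(w)\,=\,F_{(\emptyset,\{1\}),(t,x)}(w)\,:=\,\exp\bigl(i(wx-w^2 t)\bigr)
\]
makes condition \eqref{sect3-eq18} follow from the computation of the previous paragraph, which establishes the $T$-predictability of $\omega\mapsto\psi_\omega^{\boldsymbol{\mathcal T},\mathcal A}(t,x)$ with parameter space $\boldsymbol{\mathcal R}$; Theorem \ref{sect3-thm1} then applies directly. I do not anticipate a serious obstacle: the entire content of the statement lies in the observation that the transition band in the frequency variable is precisely $[-T,T]$, beyond which the signal is a pure plane wave whose free Schr\"odinger evolution extends to an entire function of $\omega$, and the boundary conditions $\mathcal T_1(\pm T)=0$, $\mathcal A(\pm T)=+\infty$ serve only to ensure that a single entire extension glues continuously to the interior data at $|\omega|=T$.
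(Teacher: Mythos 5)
Your approach matches the paper's: identify that for $|\omega|\geq T$ the initial datum collapses to the pure plane wave $\exp(i\omega x)$, compute its free evolution $\exp(i(\omega x-\omega^2 t))$ via Lemma~\ref{SECT5-lem1}, and observe that this extends to an entire function of $\omega$ depending continuously on $(t,x)\in\boldsymbol{\mathcal R}$, which is precisely what the two tail conditions in Definition~\ref{sect3-def1} (with $m=1$) require. This is the paper's argument.

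There is, however, one genuine gap. Definition~\ref{sect3-def1} requires $\phi:(\omega,t,x)\mapsto\psi^{\boldsymbol{\mathcal T},\mathcal A}_\omega(t,x)$ to be continuous on the whole of $\R\times\boldsymbol{\mathcal R}$ before one even discusses the tail extensions $F_{\bfJ}$. You check the continuity of the \emph{initial datum} $\omega\mapsto\varphi_\omega$ across $\omega=\pm T$ (this is what $\mathcal T_1(\pm T)=0$ and $\mathcal A(\pm T)=+\infty$ guarantee), and you check the continuity of the tail extension $F_{(t,x)}$, but you never verify that the \emph{evolved} signal $\psi^{\boldsymbol{\mathcal T},\mathcal A}_\omega(t,x)$ is jointly continuous for $\omega\in]-T,T[$, nor that it glues continuously to $\exp(i(\omega x-\omega^2 t))$ as $\omega\to\pm T$. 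Continuity of the initial datum does not, on its own, transfer to the evolution in the classes of unbounded signals considered here. The paper handles this by also invoking Lemma~\ref{SECT5-lem2} (evolution of gaussian chirps): for $\omega\in[-T,T]$ the evolved signal has the closed form
\[
\psi^{\boldsymbol{\mathcal T},\mathcal A}_\omega(t,x)=\frac{\gamma_{\bftau,\alpha}(t)}{\sqrt{\nu_{\tau_1,\alpha}(t)}}\exp\Big(\frac{1}{\nu_{\tau_1,\alpha}(t)}\Big(i\big(\omega+\tfrac{\tau_1}{2}x\big)x-\frac{(x-\tau_2)^2}{2^\alpha}-i(\omega+\tau_1\tau_2)^2 t\Big)\Big)
\]
with $\tau_1=\mathcal T_1(\omega)\geq 0$, $\tau_2=\mathcal T_2(\omega)$, $\alpha=\mathcal A(\omega)$, and $\nu_{\tau_1,\alpha}(t)=1+2\tau_1 t+i\,2^{2-\alpha}t$, whose real part is $\geq 1$ for $t>0$; since $\tau_1\to 0$, $2^{-\alpha}\to 0$, $\nu\to 1$, $\gamma\to 1$ as $\omega\to\pm T$, this expression is continuous on $[-T,T]\times\boldsymbol{\mathcal R}$ and matches $\exp(i(\omega x-\omega^2 t))$ at the endpoints. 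Adding this verification — one explicit appeal to Lemma~\ref{SECT5-lem2} — closes the gap and makes your argument coincide with the paper's.

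One stylistic caveat: you silently read the case distinction in the statement as being on $\omega$ rather than on $x$ as literally printed (the paper almost certainly intends $\omega$, since the boundary conditions are imposed at $\omega=\pm T$, the maps $\mathcal T_i,\mathcal A$ are only defined on $[-T,T]\ni\omega$, and the paper's proof computes the evolution ``for $\omega\leq -T$ or $\geq T$''); it would be worth making this reading explicit.
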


\begin{proof}
The continuity of $(\omega,t,x) \longmapsto \psi_\omega^{\boldsymbol {\mathcal T}, \mathcal A} (t,x)$ follows from Lemmas \ref{SECT5-lem1} and \ref{SECT5-lem2}. The result then follows from the fact that
$$
\psi_\omega^{\boldsymbol {\mathcal T}, \mathcal A} (t,x) = \exp (i (\omega\, x - \omega^2\, t))
$$
for any $\omega\leq -T$ or $\geq T$ and any $(t,x) \in \boldsymbol {\mathcal R}$, that is as a function of $\omega$ the restriction to the real line of an entire function depending continuously in
$(t,x) \in \boldsymbol {\mathcal R}$.
\end{proof}

\begin{proposition}\label{SECT5-prop4}
Let $(t,x) \longmapsto \psi_{\omega,\pm} (t,x)$ be the evolution under $\mathscr S_0$ of the continuous signal $x \longmapsto \exp (i\, |\omega|\, | x|)$ (with switch of frequency sign at the origin). Then $\psi_{\omega,\pm}$ is $0$-predictable with respect to the parameters
$(t,x) \in \boldsymbol {\mathcal R}$, hence satisfies Theorem \ref{sect3-thm1} with $T=0$.
\end{proposition}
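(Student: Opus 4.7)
The plan is to use the closed-form evolution formula from Lemma \ref{SECT5-lem3} and observe that, once the non-analytic factor $|\omega|$ is replaced by $\omega$ (valid when $\omega \geq 0$) or by $-\omega$ (valid when $\omega \leq 0$), the resulting expression extends to an entire function of $\omega$ depending continuously on $(t,x) \in \boldsymbol{\mathcal R}$. Together with the evident joint continuity of $(\omega,t,x) \mapsto \psi_{\omega,\pm}(t,x)$ read off directly from Lemma \ref{SECT5-lem3}, this will verify the two conditions of Definition \ref{sect3-def1} for $m=1$, $T_1 = 0$, with the two relevant pairs $\bfJ = (\{1\},\emptyset)$ and $\bfJ = (\emptyset,\{1\})$.

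First I would introduce the Fresnel-type primitive
$$
\mathcal F_t(\xi) := \int_0^\xi \exp\bigl(iz^2/(4t)\bigr)\, dz, \qquad t>0,\ \xi\in\C,
$$
which is entire in $\xi$ and jointly continuous in $(t,\xi)$, as a primitive of the entire function $z \mapsto \exp(iz^2/(4t))$ evaluated along any polygonal path joining $0$ to the (complex) endpoint. Lemma \ref{SECT5-lem3} then rewrites as
$$
\psi_{\omega,\pm}(t,x) = e^{-i\omega^2 t}\Bigl(\cos(|\omega|\,x) + e^{-i|\omega|\,x}\,\mathcal F_t(-x-2t|\omega|) - e^{i|\omega|\,x}\,\mathcal F_t(-x+2t|\omega|)\Bigr).
$$

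Second, I would define for each $(t,x)\in \boldsymbol{\mathcal R}$ the two candidate holomorphic extensions
\begin{align*}
F_{(\emptyset,\{1\}),(t,x)}(\omega) &:= e^{-i\omega^2 t}\Bigl(\cos(\omega x) + e^{-i\omega x}\,\mathcal F_t(-x-2t\omega) - e^{i\omega x}\,\mathcal F_t(-x+2t\omega)\Bigr),\\
F_{(\{1\},\emptyset),(t,x)}(\omega) &:= F_{(\emptyset,\{1\}),(t,x)}(-\omega).
\end{align*}
Each is entire in $\omega$ as a finite product and composition of entire functions. By the formula above and the identities $|\omega|=\omega$ for $\omega\geq 0$ and $|\omega|=-\omega$ for $\omega\leq 0$, we get $\psi_{\omega,\pm}(t,x) = F_{(\emptyset,\{1\}),(t,x)}(\omega)$ for $\omega \geq 0$ and $\psi_{\omega,\pm}(t,x) = F_{(\{1\},\emptyset),(t,x)}(\omega)$ for $\omega \leq 0$, which is exactly the requirement of Definition \ref{sect3-def1} with $T=0$ (here $\pi_{J'}(\R^m)$ collapses to a point, so the dependence is only on $(t,x)\in\boldsymbol{\mathcal R}$).

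Finally, I would verify that the maps $(t,x) \mapsto F_{\bfJ,(t,x)}$ are continuous from $\boldsymbol{\mathcal R}$ into $H(\C)$ equipped with its compact-open topology. For any compact $K \subset \C$, the map $(t,x,\omega) \mapsto F_{\bfJ,(t,x)}(\omega)$ is jointly continuous on $\boldsymbol{\mathcal R} \times K$, since $\mathcal F_t(\xi)$ is jointly continuous in $(t,\xi)$ on $]0,\infty[ \times \C$ and every other factor is elementary; joint continuity together with compactness of $K$ converts this into uniform convergence on $K$, which is the needed continuity into $H(\C)$. The $0$-predictability of $\psi_{\omega,\pm}$ is then established, and Theorem \ref{sect3-thm1} with $T=0$ applies directly. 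The only subtle point is the entirety of $\omega \mapsto \mathcal F_t(-x \pm 2t\omega)$, and once this is recognized as the primitive of the entire Gaussian chirp $z \mapsto \exp(iz^2/(4t))$, the remainder of the verification is routine.
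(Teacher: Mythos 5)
Your proof is correct and follows essentially the same route as the paper: appeal to Lemma \ref{SECT5-lem3} to express $\psi_{\omega,\pm}(t,x)$ in terms of $|\omega|$ via the Fresnel primitive $\mathcal F_t(\xi)=\int_0^\xi \exp(i z^2/(4t))\,dz$, observe that this primitive is entire in $\xi$ and depends continuously on $t>0$, and deduce that $\omega\mapsto\psi_{\omega,\pm}(t,x)$ is the composition of an entire function (depending continuously on $(t,x)\in\boldsymbol{\mathcal R}$) with $|\omega|$, which is exactly the $\mathbf{0}$-predictability pattern of Example \ref{sect3-expl1}. You merely spell out the two required holomorphic extensions $F_{(\emptyset,\{1\}),(t,x)}$ and $F_{(\{1\},\emptyset),(t,x)}(\omega)=F_{(\emptyset,\{1\}),(t,x)}(-\omega)$ and check the compact-open continuity into $H(\C)$ explicitly, where the paper leaves this to the reader.
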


\begin{proof} It follows from Lemma \ref{SECT5-lem3} and from the fact that
$$
Z \longmapsto \int_0^ Z \exp \Big (i \frac{z^2}{4t}\Big)\, dz
$$
is an entire function of $Z$ depending continuously of $t>0$ that
$\omega \mapsto \psi_{\omega,\pm}$ is the restriction to the real line of an entire function of
$|z|$ which depends continuously of the parameters $(t,x)\in \boldsymbol {\mathcal R}$.
\end{proof}


\section*{Declarations and statements}

{\bf Data availability}. The research in this paper does not imply use of data.

{\bf Conflict of interest}. The authors declare that there is no conflict of interest.

{\bf Acknowledgments}.
F. Colombo and I. Sabadini are supported by MUR grant Dipartimento di Eccellenza 2023-2027.


\begin{thebibliography}{99}


\bibitem{aav} Y. Aharonov, D. Albert, L. Vaidman, {\em How the result of a measurement of a component of the spin of a spin-1/2 particle can turn out to be 100}, Phys. Rev. Lett., {\bf 60} (1988), 1351-1354.

\bibitem{abook} Y. Aharonov, D. Rohrlich, {\em Quantum Paradoxes: Quantum Theory for the Perplexed}, Wiley-VCH Verlag, Weinheim, 2005.


\bibitem{ABCS22} Y. Aharonov, J. Berndt, F. Colombo, P. Schlosser,
{\it A unified approach to Schr\"odinger evolution of superoscillations and supershifts},
  J. Evol. Equ. 22 (2022), no. 1, Paper No. 26, 31 pp.

\bibitem{JDE}
Y. Aharonov, J. Behrndt, F. Colombo, P. Schlosser,
{\em Green's Function for the Schr\"odinger Equation with
 a Generalized Point Interaction and Stability of Superoscillations},
  J. Differential Equations, {\bf 277} (2021), 153--190.

\bibitem{ACSSST21}
Y. Aharonov, F. Colombo, I. Sabadini, T. Shushi, D.C. Struppa, J. Tollaksen, {\it A new method to generate superoscillating
functions and supershifts}, Proc. R. Soc. A. \textbf{477} (2249), Paper No. 20210020, 12 pp (2021).

\bibitem{ACJSSST22} Y. Aharonov,  F. Colombo, A.N. Jordan, I. Sabadini, T. Shushi, D.C. Struppa, J. Tollaksen, \textit{On superoscillations and supershifts in several variables},
 Quantum Stud. Math. Found. 9 (2022), no. 4, 417-433.

\bibitem{ACSST17A} Y. Aharonov,  F. Colombo,  I. Sabadini, D.C. Struppa, J. Tollaksen,
{\em The mathematics of superoscillations}, Mem. Amer. Math. Soc. 247 (2017), no. 1174, v+107 pp.


\bibitem{Bern12} S. Bernstein,
\textit{D\'emonstration du th\'eor\`eme de Weierstrass, fond\'ee sur le calcul des probabilit\'es},
Commun. Soc. Math. Kharkow {\textbf 2}, 13 (1912-13), 1-2.

\bibitem{Bern18} S. Bernstein,
\textit{Quelques remarques sur l'interpolation},
Math. Annalen \textbf{79} (1918), pp. 1-12.

\bibitem{Bern35} S. Bernstein,
\textit{Sur la convergence de certaines suites de polyn\^omes}, J. Math. Pures Appl. {\textbf 15}, no. 9 (1935), pp. 345-358.

\bibitem{BOREL} J. Behrndt, F. Colombo, P. Schlosser, D.C. Struppa, {\em Integral representation of superoscillations via complex Borel measures and their convergence}, Trans. Amer. Math. Soc. 376 (2023), no. 9, 6315-6340.

\bibitem{b4} M. V. Berry, S. Popescu,
{\em Evolution of quantum superoscillations, and optical superresolution
without evanescent waves},
{ J. Phys. A}, {\bf 39} (2006), 6965--6977.

\bibitem{Be19} M. Berry et al, \textit{Roadmap on superoscillations}, 2019, Journal of Optics 21 053002.

\bibitem{b5} M.V. Berry, P. Shukla, {\em Pointer supershifts and superoscillations in weak measurements}, {\em J. Phys A}, {\bf 45} (2012), 015301.

\bibitem {CPSS23} F. Colombo, S. Pinton, I. Sabadini, D.C. Struppa, \textit{The General Theory of Superoscillations and Supershifts in Several Variables}, J. Fourier Anal. Appl. 29 (2023), no. 6, Paper No. 66, 24 pp.
29:66.

\bibitem{CSSY22} F. Colombo, I. Sabadini, D.C. Struppa, A. Yger,
{\em Superoscillating sequences and supershifts for families of
generalized functions}, Complex Analysis and Operator Theory 16 (2022), article 34.


\bibitem{CSSY21} F. Colombo, I. Sabadini, D.C. Struppa, A. Yger,
{\it Gauss sums, superoscillations and the Talbot carpet}, J. Math. Pures Appl. \textbf{147} (2021) pp. 163--178.

\bibitem{SPIN} F. Colombo, E. Pozzi, I. Sabadini, B. D. Wick,
{\em Evolution of superoscillations for spinning particles},
Proc. Amer. Math. Soc. Ser. B 10 (2023), 129-143.

 \bibitem{cssy23}
F. Colombo, I. Sabadini, D. C. Struppa, A. Yger,
{\em Analyticity and supershift with regular sampling},
Preprint arXiv:2310.11528.



\bibitem{IRREGULAR-SAMP}
F. Colombo, I. Sabadini, D. C. Struppa, A. Yger,
{\em Analyticity and supershift with irregular sampling}, To appear in Complex Analysis and its Synergies, preprint available on
arXiv:2312.05089.

\bibitem{Davis75} P. J. Davis, \textit{Interpolation and approximation},
Dover Publications, Inc., New York, 1975.

\bibitem {DMBH16}
N.Q. Dieu, P.V. Manh, P.H. Bang, L.T. Hung,
\textit{Vitali's theorem without uniform boundedness}, Publ. Mat. \textbf{60} (2016), pp. 311-334.

\bibitem{ErdV80} P. Erd\"os, P. V\'ertesi,
\textit{On the almost everywhere divergence of Lagrange interpolatory polynomials for arbitrary system of nodes},  Acta Math. Acad. Sci. Hungar. 36 (1980), no. 1-2, 71-89.


\bibitem{OPTICS} F.M. Huang, Y. Chen, F.J.G. De Abajo, N.I. Zheludev,
{\em Optical super-resolution through super-oscillations},
J. Optics A: Pure Appl.Optics, 9(9) (2007) p.285.



\bibitem{SPGR} T. Karmakar, S.A. Wadood, A.N. Jordan, A.N. Vamivakas,
{\em Experimental realization of supergrowing fields}, arXiv:2309.00016.


\bibitem{KM1} T. Karmakar, A. Chakraborty, A.N. Vamivakas, A.N. Jordan,
{\em Supergrowth and sub-wavelength object imaging}, Opt. Express, 31(22) (2023) pp.37174-37185.

\bibitem{Kanto31} L. Kantorovitch,
\textit{Sur la convergence de la suite des polyn\^omes de S. Bernstein en dehors de l'intervalle
fondamental}, Bull. Acad. Sci. URSS (1931), pp. 1103-1115.

\bibitem{Phil03}
G. M. Phillips, \textit{Interpolation and Approximation by Polynomials}, CMS Books in Mathematics, Springer Science + Business Media New York, 2003.

\bibitem{Kow16} E. Kowalski, \textit{Bernstein polynomials and Brownian motion}, Amer. Math. Monthly 113 (2006), no. 10, 865–886.

\bibitem{Lor53} G.G. Lorentz,
\textit{Bernstein polynomials}, Toronto, 1953 (second edition : Chelsea Publishing Company, New York, 1986).

\bibitem{QS20} A.N. Jordan, {\em Superresolution using supergrowth and intensity contrast imaging},
Quantum Stud. Math. Found., 7(3) (2020) 285-292.



\bibitem{Neum1862}
K. Neumann, \textit{Ub\"er die Entwickelung Einer Funktion Nach den Kugelfunktionen}, Journal f\"ur Mathematik, 1862.


\bibitem{Pozzi}
E. Pozzi,  B. D. Wick, {\em Persistence of superoscillations under the Schrödinger equation},
 Evol. Equ. Control Theory, 11 (2022), no. 3, 869-894.


\bibitem{Rev2000} M. Revers,
\textit{The divergence of Lagrange interpolation for $|x|^\alpha$ at equidistant nodes},
 J. Approx. Theory, 103 (2000), no. 2, 269-280.

\bibitem{peter}
 P. Schlosser,  {\em Time evolution of superoscillations for the Schr\"odinger equation on $\mathbb{R}\setminus \{0\}$},
  Quantum Stud. Math. Found. 9 (2022), no. 3, 343–366.


\bibitem{sodakemp}
B. Šoda,  A. Kempf, {\em  Efficient method to create superoscillations with generic target behavior},
Quantum Stud. Math. Found., {\bf 7} (2020), no. 3, 347--353.


\end{thebibliography}
\end{document}